\documentclass[12pt]{article}
\usepackage{amsmath,amssymb,amsthm,tikz,fullpage}

\newcommand{\cart}{\mathop{\Box}}
\newcommand{\ceil}[1]{\left \lceil #1 \right \rceil}
\newcommand{\floor}[1]{\left \lfloor #1 \right \rfloor}
\newcommand{\size}[1]{\left \vert #1 \right \vert}

\def\qed{\hfill\rule{2mm}{2mm}\par\bigskip}

\def \gmmax{\alpha_g'}
\def \gmmin{\hat \alpha_g'}
\def \match{\alpha'}
\def \sat{\mathrm{sat}}
\def \ex{\mathrm{ex}}
\def \gmsat{\sat_g}
\def \gmsatmin{\widehat{\sat}_g}

\newtheorem{theorem}{Theorem}[section]

\newtheorem{lemma}[theorem]{Lemma}
\newtheorem{corollary}[theorem]{Corollary}
\newtheorem{proposition}[theorem]{Proposition}

\theoremstyle{definition}
\newtheorem{definition}[theorem]{Definition}

\newtheorem{example}[theorem]{Example}

\def\Max{\alpha'_g}
\def\Min{\hat\alpha'_g}
\def\ap{\alpha'}
\def\C#1{\left|{#1}\right|}
\def\FR{\frac}
\def\FL{\floor}
\def\esub{\subseteq}

\def\Hb{\overline{H}}

\def\NN{\mathbb{N}}
\def\UE#1#2#3{\bigcup_{#1=#2}^{#3}}
\def\almin{\mu}

\begin{document}

\title{Game matching number of graphs}

\author{
Daniel W. Cranston\thanks{Department of Mathematics and Applied
Mathematics, Virginia Commonwealth University, dcranston@vcu.edu},
William B. Kinnersley\thanks{Mathematics Department,
University of Illinois, wkinners@gmail.com.
Research supported by National Science Foundation grant DMS 08-38434
``EMSW21-MCTP: Research Experience for Graduate Students''.},
Suil O\thanks{Mathematics Department, College of William and Mary,
so@wm.edu},
Douglas B. West\thanks{Mathematics Department,
Zhejian Normal University and University of Illinois, west@math.uiuc.edu.
Research supported by the National Security Agency under Award
No.~H98230-09-1-0363.}
}

\date{revised December, 2012}
\maketitle

\begin{abstract}
We study a competitive optimization version of $\alpha'(G)$, the maximum size
of a matching in a graph $G$.  Players alternate adding edges of $G$ to a
matching until it becomes a maximal matching.  One player (Max) wants the final
matching to be large; the other (Min) wants it to be small.  The resulting
sizes under optimal play when Max or Min starts are denoted $\Max(G)$ and
$\Min(G)$, respectively.  We show that always $\C{\Max(G)-\Min(G)}\le1$.  We
obtain a sufficient condition for $\Max(G)=\ap(G)$.  Always
$\Max(G)\ge \FR23\ap(G)$, with equality for many split graphs, while
$\Max(G)\ge\FR34\ap(G)$ when $G$ is a forest.  Whenever $G$ is a $3$-regular
$n$-vertex connected graph, $\Max(G) \ge n/3$, and such graphs exist with
$\Max(G)< 7n/18$.  For an $n$-vertex path or cycle, the value is roughly $n/7$.
\end{abstract}

\baselineskip 16pt

\section{Introduction}
The archetypal question in extremal graph theory is ``how many edges can an
$n$-vertex graph contain without containing a copy of a forbidden subgraph
$F$?''  The answer is the {\em extremal number} of $F$, denoted $\ex(F;n)$.  The
celebrated theorem of Tur\'an~\cite{Tur} gives a formula for $\ex(K_r;n)$ and
characterizes the largest graphs not containing $K_r$.

A graph $H$ is {\em $F$-saturated} if $F \not \subseteq H$, but $F\esub H+e$
whenever $e\in E(\Hb)$.  Thus the extremal number $\ex(F;n)$ is
the maximum size of an $F$-saturated $n$-vertex graph.  The {\em saturation
number} of $F$, denoted $\sat(F;n)$, is the minimum size of an $F$-saturated
$n$-vertex graph.  Erd\H{o}s, Hajnal, and Moon~\cite{EHM} initiated the study
of graph saturation, determining $\sat(K_r;n)$.  
\looseness-1

More generally, for fixed graphs $F$ and $G$, a subgraph $H$ of $G$ is
{\em $F$-saturated relative to $G$} if $F\not\esub H$, but $F\esub H+e$ 
whenever $e\in E(G)-E(H)$.  When edges are successively added to $H$ to reach a
subgraph that is $F$-saturated relative to $G$, the final number of edges is
between the minimum and maximum sizes of such subgraphs.  When the edges are
chosen by two players with opposing objectives, we obtain a natural game.

\begin{definition}
The {\em $F$-saturation game} on a graph $G$ is played by two players, Max and
Min.  The players jointly construct a subgraph $H$ of $G$ by alternately adding
edges of $G$ to $H$ without completing a copy of $F$.  The game ends when $H$
is $F$-saturated relative to $G$.  Max aims to maximize the length of the game,
while Min aims to minimize it.  The {\em game $F$-saturation number} of $G$ is
the length of the game under optimal play, denoted by $\gmsat(F,G)$ when Max
starts and by $\gmsatmin(F,G)$ when Min starts.
\end{definition}

The saturation game bears similarities to other games.  In a {\em Maker-Breaker}
game, the players choose edges of a graph $G$ in turn.  Maker wins by claiming
the edges in a subgraph having a desired property ${\cal P}$; Breaker wins by
preventing this.  For example, Hefetz, Krivelevich, Stojakovi\'c, and
Szab\'o~\cite{HKSS1} studied Maker-Breaker games on $K_n$ in which Maker seeks
to build non-planar graphs, non-$k$-colorable graphs, or $K_t$-minors.

In a Maker-Breaker game, one may ask how quickly Maker can win.  Breaker then
acts like Max in the saturation game, aiming to prolong the game.  Hefetz,
Krivelevich, Stojakovi\'c, and Szab\'o~\cite{HKSS2} showed that on $K_n$, Maker
can build a spanning cycle within $n+2$ turns and can build a $k$-connected
subgraph within $(1+o(1))kn/2$ turns.  Feldheim and Krivelevich~\cite{FK}
showed that Maker can build a given $d$-degenerate $p$-vertex graph within
$d^{11}2^{2d+7}p$ moves (when $n$ is large relative to $d$ and $p$).

Saturation games were introduced by F\" uredi, Reimer, and Seress \cite{FRS};
they studied the value $\gmsat(K_3, K_n)$ as ``a variant of Hajnal's
triangle-free game''.  (In Hajnal's original ``triangle-free game'', the
players try to avoid triangles, with the loser being the player forced to
create one; see~\cite{CHR,GP,MS1,MS2,Pra,Ser}.)  Since the $F$-saturation game
always produces an $F$-saturated graph, trivially
$n-1 = \sat(K_3;n) \le \gmsat(K_3,K_n) \le \ex(K_3;n) = \floor{n^2/4}$.
The lower bound on $\gmsat(K_3,K_n)$ from~\cite{FRS} is $\Omega(n\lg n)$.
An unpublished result of Erd\H{o}s states $n^2/5$ as an upper bound.  Bir\'o,
Horn, and Wildstrom~\cite{BHW} improved the leading coefficient in the upper
bound, but the correct order of growth remains unknown.

In this paper, we study the $P_3$-saturation game.  The $P_3$-saturated
subgraphs of $G$ are the maximal matchings in $G$, so we call $\gmsat(P_3,G)$
the {\em game matching number of $G$}.  With $\ap(G)$ denoting the maximum size
of a matching in $G$ (the {\it matching number}), we let $\Max(G)$ and
$\Min(G)$ denote the sizes of the matchings produced under optimal play
in the {\it Max-start game} (where Max plays first) and
in the {\it Min-start game} (where Min plays first), respectively.

The outcome of an $F$-saturation game may depend heavily on which player starts.
For example, if $G$ arises from $K_{1,k}$ by subdividing one edge, then
$\gmsat(G,2K_2) = k$, but $\gmsatmin(G,2K_2) = 2$.  The special case of game
matching ($F=P_3$) is much better behaved; the main result of
Section~\ref{sec_gamematch} states that always $\size{\gmmax(G)-\gmmin(G)}\le1$.
The proof involves also proving $\Max(H)\le\Max(G)$ and $\Min(H)\le \Min(G)$
when $H$ is an induced subgraph of $G$.

Section~\ref{sec_match_gamematch} examines the relationship between $\gmmax(G)$
and $\match(G)$.  We obtain a sufficient condition for $\gmmax(G)=\match(G)$;
it is somewhat technical but is preserved by taking the cartesian product with
any other graph.  We also prove $\gmmax(G)\ge\FR23\match(G)$ for all $G$.  This
inequality is sharp; equality holds for many split graphs (a graph is a
{\it split graph} if its vertex set can be partitioned into a clique and an
independent set).  We also show that the maximum number of edges in an
$n$-vertex graph such that $\ap(G)=3k$ and $\Max(G)=2k$ is
$\binom{3k}2+3k(n-3k)$ when $n\ge6k$.  Finally, we prove the general upper
bound $\gmmin(G)\le \FR32\mu(G)$, where $\mu(G)$ is the minimum size of a
maximal matching in $G$.

In Section~\ref{forests}, we restrict our attention to forests, where the 
lower bound in terms of $\ap(G)$ can be improved.  When $G$ is a forest,
$\gmmax(G) \ge \FR34\match(G)$, which is sharp (equality holds for the ``comb''
obtained by adding a pendant edge at each vertex of the path $P_{4k}$).
We also prove that $\Min(F)\le\Max(F)$ when $F$ is a forest, which means that
there is no advantage in starting second or skipping a turn.  A closely
related result is that adding a star component to a forest increases both
parameters by $1$.

Finally, in Section~\ref{smalldeg} we consider graphs with small maximum 
degree.  We show that optimal play on the $n$-vertex path $P_n$ produces a
maximal matching of size differing from $n/7$ by less than $3$.  Note also
that the outcome on $C_n$ is greater by $1$ than the outcome on $P_{n-2}$ with
the other player starting.  For a connected $3$-regular $n$-vertex graph $G$,
we prove that always $\Max(G)\ge n/3$, and we construct such graphs with
$\Max(G)< 7n/18$.  There are disconnected $3$-regular graphs with
$\Max(G)\le 3n/8$.

\section{Max-start vs.~Min-start}\label{sec_gamematch}

Our primary goal in this section is to determine the positive integer pairs
$(r,s)$ that are realizable as $(\Max(G),\Min(G))$ for some graph $G$.  We show
first that all pairs with $|r-s|\le 1$ are realizable (except $(1,2)$).  The
main result is then that these are the only such pairs; in other words,
the choice of the starting player makes little difference.  As part of the
proof, we will show also that vertex deletion cannot increase $\Max$ or
$\Min$.

We state explicitly the special case for game matching of the trivial upper
and lower bounds for game saturation: $\Max(G)$ and $\Min(G)$ are bounded above
by $\match(G)$ and bounded below by $\almin(G)$, the minimum size of a maximal
matching in $G$.

\begin{proposition}
A pair $(r,s)$ of positive integers is realizable as $(\Max(G),\Min(G))$ for
some graph $G$ if $|r-s|\le1$ (except for $(r,s)=(1,2)$), and $G$ may be
required to be connected.
\end{proposition}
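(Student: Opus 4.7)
The plan is to split by the three sign cases of $r-s$, giving an explicit connected construction for each realizable pair, and then to dispose of the single exception $(1,2)$. That $(1,2)$ is impossible is easy: if $\Min(G)\ge 2$, then for every edge $e\in E(G)$ the induced subgraph $G-V(e)$ must contain at least one edge (otherwise Min could end the game in a single move and achieve $\Min=1$). Hence Max, playing first, may choose any edge and be guaranteed that at least one further edge remains to be played, forcing $\Max(G)\ge 2$.

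For the balanced case $(r,r)$, I would take the spider $S_r$ with central vertex $v$, spokes $vu_1,\ldots,vu_r$, and a pendant $u_iw_i$ at each $u_i$; here $\ap(S_r)=r$. The inequality $\Min(S_r)\le r$ comes from Min's opening move $vu_1$, which blocks every other spoke and the leg-edge $u_1w_1$, after which the remaining $r-1$ independent pendants must all be played. The inequality $\Max(S_r)\ge r$ follows by induction once Max opens with $u_1w_1$, since the residual game is exactly $S_{r-1}$ with Min to start. For $(r+1,r)$, I would take the daisy $D_r$ obtained by attaching $r$ triangles $va_ib_i$ at a common vertex $v$ and adding one further pendant edge $vp$. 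Here $\ap(D_r)=r+1$, witnessed by $\{vp\}\cup\{a_ib_i:1\le i\le r\}$; Max plays $vp$ first, blocking every $v$-incident spoke and forcing the $r$ independent pendants $a_ib_i$, for a total of $r+1$, while Min plays $va_1$ first, blocking $vp$, $a_1b_1$, and all remaining $v$-edges, leaving only the $r-1$ forced pendants $a_ib_i$ with $i\ge 2$.

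The subtlest case is $(r,r+1)$ for $r\ge 2$, where going first is a handicap. For $r=2$ I would use $C_6$: edge-transitivity implies that every opening reduces the residual graph to $P_4$, and since $\Max(P_4)=2$ and $\Min(P_4)=1$, one obtains $\Max(C_6)=1+\Min(P_4)=2$ and $\Min(C_6)=1+\Max(P_4)=3$. The same argument applied to $C_8$ together with the values $\Max(P_6)=3$ and $\Min(P_6)=2$ yields $(3,4)$. For general $r\ge 3$ the construction is delicate, since natural attachments to $C_6$ (a pendant edge, a disjoint $K_2$, a shared-vertex triangle, a length-two leg) all break the imbalance on small cases and typically even reverse it. My plan is therefore, for each $r$, to look for a cycle $C_n$ whose associated path $P_{n-2}$ satisfies $\Max(P_{n-2})>\Min(P_{n-2})$ and to exploit the identities $\Max(C_n)=1+\Min(P_{n-2})$ and $\Min(C_n)=1+\Max(P_{n-2})$; when no cycle is available for a given $r$, a bespoke connected graph may be assembled by joining a $(2,3)$-realizer to a $(k,k)$-realizer through a bridge edge forced to be played in every optimal strategy, with the values verified case by case. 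The main technical step, and the chief obstacle, is this last case analysis: one must show that each player's best opening on the composite graph reduces the game to a smaller instance whose value is known by induction.
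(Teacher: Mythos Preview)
Your treatment of the exception $(1,2)$, the balanced case $(r,r)$, and the case $(r+1,r)$ is correct, and the spider and daisy constructions are pleasant alternatives to the paper's $K_{2r}$ and $K_{2r-1}$-plus-pendant.  (One small omission in the write-up: you never verify $\Min(S_r)\ge r$ or $\Min(D_r)\ge r$; but in each graph every maximal matching has size at least $r$, so these lower bounds are immediate.)

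The genuine gap is the case $(r,r+1)$.  You give complete arguments only for $r\in\{2,3\}$ via $C_6$ and $C_8$; for $r\ge 4$ you offer only a \emph{plan}---search among cycles, and when none works, glue a $(2,3)$-realizer to a $(k,k)$-realizer by a bridge and verify ``case by case''.  Neither step is carried out, and you explicitly flag the second as ``the chief obstacle''.  There is real reason to doubt the cycle route covers all $r$: by the paper's path result, $\Max(P_n)$ and $\Min(P_n)$ are each asymptotic to $3n/7$, so the pairs $(\Max(C_n),\Min(C_n))=(1+\Min(P_{n-2}),\,1+\Max(P_{n-2}))$ advance by roughly $3$ every time $n$ increases by $7$, with no a~priori guarantee that every target $r$ appears.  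The bridge-gluing idea would require showing that neither player can profitably interleave moves across the bridge, and you have not done this.

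The paper sidesteps the difficulty with a uniform construction based on a mirroring strategy for the second player.  For $(2k,2k+1)$ it uses $K_{4k+2}$ minus a perfect matching: any first move joins endpoints of two deleted edges, and the second player answers with the edge joining the other two endpoints, reducing to the same graph on four fewer vertices.  For $(2k-1,2k)$ with $k\ge 2$ it starts from $2K_{2k}$, deletes one edge from each component, and reconnects the four special vertices by a different pair of edges; a short case split on whether the first move touches a special vertex again reduces to a known smaller instance.  This symmetry-exploiting response is the missing idea in your proposal.
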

\begin{proof}
If Min cannot move without leaving another move, then the same is true for a
first move by Max, so $(1,2)$ is not realizable.
The complete graph $K_{2r}$ realizes $(r,r)$.  To realize $(r,r-1)$, add a
pendant vertex to $K_{2r-1}$.

To realize $(r,r+1)$, we present a graph $G$ and give strategies for the second
player to ensure $\Max(G)\le r$ and $\Min(G)\ge r+1$.  These values are optimal,
because in each case we will have $\Max(G)\ge\almin(G)=r$ and
$\Min(G)\le\match(G)=r+1$.

To realize $(2k,2k+1)$, take $K_{4k+2}$ and discard a perfect matching.  When
$k=1$, the graph after the first move is always $K_{1,1,2}$; Min moving next
can prevent a third move, while Max moving next can guarantee a third move.
For $k\ge2$, any move by the first player joins endpoints of two deleted edges,
and the second player can play the edge joining the other two endpoints of
those edges to reach the same situation in a smaller graph.

To realize $(2k-1,2k)$ for $k\ge 2$, take $2K_{2k}$, delete one edge from each
component, creating four {\it special} vertices, and restore regularity by
adding a different pair of edges on the special vertices.  If Min makes a first
move involving a special vertex, then Max can move to leave $2K_{2k-2}$.
If Max makes a first move involving a special vertex, then Min can move to
leave $K_{2k-3}+K_{2k-1}$ (we use ``$+$'' to denote disjoint union).  A first
move not involving a special vertex can be mirrored on the vertices of the
other large clique to reach the same situation in a smaller graph, except that
when $k=2$ such a first move by Max can be answered by Min to leave $P_3+K_1$.
\end{proof}

The pathology $\Max(G)<\Min(G)$ cannot occur when $G$ is a forest; we prove
this in Section~\ref{forests}.  Meanwhile, we begin the proof of the general
bounds $|\Max(G)-\Min(G)|\le 1$ with a simple observation that will also be
useful in other contexts.

\begin{proposition}\label{optimal}
If $uv$ is an edge in a graph $G$, then $\gmmax(G)\ge1+\gmmin(G-\{u,v\})$, with
equality if and only if $uv$ is an optimal first move for Max on $G$.
Likewise, $\gmmin(G)\le1+\gmmax(G-\{u,v\})$, with equality if and only if $uv$
is an optimal first move for Min on $G$.
\end{proposition}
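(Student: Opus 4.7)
The plan is to exploit the recursive game-tree structure of the $P_3$-saturation game, which for game matching reduces each turn to a smaller game on a vertex-deleted subgraph. The single key observation is that once an edge $uv$ has been added to the matching, every edge incident to $u$ or $v$ becomes forbidden for the remainder of play, because any such edge would share a vertex with $uv$ and violate the matching property. Hence, after an opening move $uv$ on $G$, the set of remaining playable edges is exactly $E(G-\{u,v\})$, and the subsequent play constitutes a game on $G-\{u,v\}$ with the opponent moving first.

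First I would record this reduction explicitly as the ``Bellman equation'' for game matching: whenever $E(G)\ne\nul$,
\[
\gmmax(G)=\max_{xy\in E(G)}\bigl(1+\gmmin(G-\{x,y\})\bigr) \quad\text{and}\quad \gmmin(G)=\min_{xy\in E(G)}\bigl(1+\gmmax(G-\{x,y\})\bigr).
\]
Each identity is justified by the informal argument above, together with the observation that after an opening move the player to move in the induced subgame is the opponent of the opener, so the appropriate value in the recursion flips parity ($\gmmin$ appears in the $\gmmax$ recursion and vice versa); the opening edge itself contributes the summand $+1$.

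Both parts of the proposition then fall out immediately from these identities. The inequality $\gmmax(G)\ge 1+\gmmin(G-\{u,v\})$ is just the bound obtained by restricting the maximum to the single term $xy=uv$, and equality holds precisely when $uv$ attains that maximum, which is by definition the condition that $uv$ is an optimal first move for Max. The claim for $\gmmin$ is the symmetric statement, restricting the minimum to $xy=uv$. There is essentially no main obstacle; the only points requiring explicit care are the parity of whose turn it is after the opening move (so that $\gmmin$ and not $\gmmax$ appears inside the Max recursion), and the tautological unfolding of ``optimal first move'' as ``move that achieves the extremum in the Bellman recursion.''
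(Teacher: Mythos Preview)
Your proof is correct and is essentially the same as the paper's: both observe that after an opening move $uv$ the remaining game is exactly the opponent-start game on $G-\{u,v\}$, so the value with that opening is $1+\gmmin(G-\{u,v\})$ (resp.\ $1+\gmmax(G-\{u,v\})$), and the first player can only do at least as well, with equality precisely when $uv$ is optimal. The paper states this in two sentences without writing out the Bellman recursion explicitly, but the content is identical.
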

\begin{proof}
The right side of each claimed inequality is the result under optimal play
after $uv$ is played as the first move.  The first player does at least as well
as this, with equality (by definition) if and only if $uv$ is an optimal first
move.
\end{proof}

We facilitate the inductive proof of $|\Max(G)-\Min(G)|\le1$  by proving
simultaneously that both game matching numbers are monotone under the deletion
of vertices. 

\begin{theorem}\label{monotone}\label{at_most_one}
If $G$ is a graph, and $v$ is a vertex in $G$, then\\
(1) $|\Max(G)-\Min(G)|\le 1$, and\\
(2) $\Max(G)\ge \Max(G-v)$ and $\Min(G)\ge \Min(G-v)$.
\end{theorem}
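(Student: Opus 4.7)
The plan is to induct on $|V(G)|$, proving (1) and (2) simultaneously; the base case $|V(G)|\le 1$ is immediate since both game values are $0$. Assume both statements hold for all graphs with fewer vertices than $G$. I would prove (2) for $G$ first, because one direction of (1) turns out to rely on (2) for $G$ itself.

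For (2), let me start with $\Max(G)\ge\Max(G-v)$. Let $xy$ be an optimal first move for Max on $G-v$, so $\Max(G-v)=1+\Min(G-v-\{x,y\})$ by Proposition~\ref{optimal}. Since $xy\in E(G)$, Max can play it in $G$, giving $\Max(G)\ge 1+\Min(G-\{x,y\})$. Applying the induction hypothesis (2) to the smaller graph $G-\{x,y\}$ yields $\Min(G-\{x,y\})\ge\Min(G-\{x,y\}-v)=\Min(G-v-\{x,y\})$, and the two bounds combine. The genuinely delicate case, and what I expect to be the main obstacle, is $\Min(G)\ge\Min(G-v)$. Let $xy$ be Min's optimal first move on $G$, so $\Min(G)=1+\Max(G-\{x,y\})$. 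If $v\notin\{x,y\}$, then Min can mimic $xy$ in $G-v$ and the argument parallels the Max case. If $v\in\{x,y\}$, say $v=x$, then Min's $G$-move does not survive in $G-v$, and I would split on whether $y$ is isolated in $G-v$. If $y$ is isolated there, then the games on $G-v$ and $G-\{v,y\}$ coincide, so induction hypothesis (1) gives $\Min(G-v)=\Min(G-\{v,y\})\le\Max(G-\{v,y\})+1=\Min(G)$. Otherwise, choose any neighbor $z$ of $y$ in $G-v$; playing the (possibly suboptimal) move $yz$ in $G-v$ gives $\Min(G-v)\le 1+\Max(G-\{v,y,z\})\le 1+\Max(G-\{v,y\})=\Min(G)$, where the middle inequality is induction hypothesis (2) applied to $G-\{v,y\}$.

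With (2) established for $G$, part (1) follows cleanly. Let $uw$ be an optimal first move for Max on $G$, so $\Max(G)=1+\Min(G-\{u,w\})$. For $\Max(G)\le\Min(G)+1$, apply part (2) for $G$ (just proved) twice to obtain $\Min(G)\ge\Min(G-u)\ge\Min(G-\{u,w\})$, yielding $\Max(G)=1+\Min(G-\{u,w\})\le 1+\Min(G)$. For the reverse $\Min(G)\le\Max(G)+1$, Min can mirror $uw$ in $G$, so $\Min(G)\le 1+\Max(G-\{u,w\})$, and induction hypothesis (1) on the smaller graph $G-\{u,w\}$ gives $\Max(G-\{u,w\})\le\Min(G-\{u,w\})+1$, whence $\Min(G)\le\Min(G-\{u,w\})+2=\Max(G)+1$. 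The crux of the whole argument is the suboptimal-move trick in the subcase above: instead of trying to copy Min's $G$-move into $G-v$ where it has disappeared, Min plays one extra edge at the orphaned endpoint $y$, and the resulting slack is absorbed by vertex-monotonicity on a strictly smaller graph.
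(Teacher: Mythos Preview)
Your proof is correct and follows essentially the same inductive scheme as the paper: prove (2) first via the case split on whether $v\in\{x,y\}$ (with the same ``play $yz$ at the orphaned endpoint'' trick), then derive (1) from (2). The only cosmetic differences are that for $\Min(G)\le\Max(G)+1$ you route through the induction hypothesis (1) on $G-\{u,w\}$ whereas the paper argues symmetrically via (2), and your phrase ``apply part (2) for $G$ twice'' should really read ``for $G$ and for $G-u$'' since the second application is to a strictly smaller graph.
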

\begin{proof}
We prove both statements simultaneously by induction on $|V(G)|$.  They hold
by inspection for $|V(G)|\le 2$, so consider larger $G$.

\medskip
{\bf Step 1:} {\it If (1) and (2) hold for smaller graphs, then (2) holds for
$G$.}
First consider $\Max(G)$.  Let $H=G-v$, and let $xy$ be an optimal first move
in the Max-start game on $H$.  Let $H'=H-\{x,y\}$ and $G'=G-\{x,y\}$.  By
Proposition~\ref{optimal}, $\gmmax(H)=1+\gmmin(H')$ and
$\gmmax(G)\ge1+\gmmin(G')$.  Since $H'=G'-v$, applying (2) for $G'$ yields
$$\gmmax(G) \ge 1 + \gmmin(G') \ge 1 + \gmmin(H') = \gmmax(H).$$

Now consider $\Min(G)$, with again $H=G-v$.  Let $xy$ be an optimal first move
in the Min-start game on $G$ (not on $H$!), and let $G'=G-\{x,y\}$.

If $v\notin\{x,y\}$, then let $H'=H-\{x,y\}$; here $H'=G'-v$.  If $v=x$, and
$y$ has a neighbor $z$ in $H$, then let $H'=H-\{y,z\}$; here $H'=G'-z$.  In
both cases, applying optimality of $xy$ for the first move on $G$, statement
(2) for $G'$, and Proposition~\ref{optimal} for $H$ yields
$$\gmmin(G) = 1 + \gmmax(G') \ge 1 + \gmmax(H') \ge \gmmin(H).$$

By symmetry in $x$ and $y$, the only remaining case is $v=x$ with $y$ isolated
in $H$.  Here the irrelevance of isolated vertices and (1) for $H$ yield
$$\gmmin(G)=1+\gmmax(G')=1+\gmmax(H-y)=1+\gmmax(H) \ge \gmmin(H).$$

\medskip
{\bf Step 2:} {\it If (2) holds for $G$ (and smaller graphs), then (1) holds
for $G$.}
To prove $\gmmin(G) \le 1 + \gmmax(G)$, let $uv$ be an optimal first move in
the Min-start game on $G$, and let $G' = G - \{u,v\}$.  Applying (2) to both
$G-v$ and $G$ yields
$$\gmmin(G) = 1 + \gmmax(G') \le 1+\gmmax(G-v)\le 1 + \gmmax(G).$$
The inequality $\gmmax(G) \le 1 + \gmmin(G)$ follows by the same computation
with $\Max$ and $\Min$ exchanged, where $uv$ an optimal first move in the
Max-start game on $G$.
\end{proof}

As a corollary, we obtain the following result:

\begin{corollary}\label{del_vertex}
If $v$ is a vertex in a graph $G$, then
$\Max(G)\ge \Max(G-v)\ge \Max(G)-2$ and $\Min(G)\ge \Min(G-v)\ge \Min(G)-2$,
and the bounds are sharp.
\end{corollary}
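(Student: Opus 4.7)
The plan is to derive both lower bounds by combining Proposition~\ref{optimal} with the two parts of Theorem~\ref{monotone}. The left-hand inequality in each chain, $\Max(G)\ge\Max(G-v)$ and $\Min(G)\ge\Min(G-v)$, is exactly Theorem~\ref{monotone}(2), so only the $-2$ bounds require work.

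If $v$ is isolated in $G$, then $G$ and $G-v$ yield identical games and both quantities are unchanged, so assume $v$ has a neighbor $u$. The key observation is that $uv$ is a legal first move in either game on $G$, so Proposition~\ref{optimal} applied to the Min-start game with the (possibly suboptimal) first move $uv$ gives $\Min(G)\le 1+\Max(G-\{u,v\})$. Since $G-\{u,v\}=(G-v)-u$, Theorem~\ref{monotone}(2) then yields $\Max(G-\{u,v\})\le\Max(G-v)$. Chaining these with Theorem~\ref{monotone}(1) proves both bounds simultaneously:
\begin{align*}
\Max(G)&\le\Min(G)+1\le\Max(G-\{u,v\})+2\le\Max(G-v)+2,\\
\Min(G)&\le\Max(G-\{u,v\})+1\le\Max(G-v)+1\le\Min(G-v)+2,
\end{align*}
where the last step of the Min chain is Theorem~\ref{monotone}(1) applied to the graph $G-v$.

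For sharpness, equality on the left (no drop) is routine: any odd-order complete graph with one vertex deleted shows $\Max(G)=\Max(G-v)$. A decrease of $1$ is already visible when $G=K_2$ and $v$ is either vertex. The expected obstacle is exhibiting a decrease of exactly $2$, which forces the entire chain to be tight: one needs a graph with $\Max(G)=\Min(G)+1$, a neighbor $u$ of $v$ for which $uv$ is an optimal first move for Min, and the additional constraint $\Max((G-v)-u)=\Max(G-v)$. I would look for such an example by attaching a carefully placed pendant to a graph of the $(r,r-1)$ type constructed in the preceding proposition, chosen so that the pendant edge is forced to be Min-optimal and its endpoint is redundant for Max's strategy in the reduced graph.
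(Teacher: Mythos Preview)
Your derivation of the four inequalities is correct and matches the paper's argument; the paper routes the $\Min$ chain through $\Min(G-\{u,v\})$ instead of $\Max(G-v)$, but this is an immaterial reordering of the same two applications of Theorem~\ref{monotone}.

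The gap is in sharpness. The corollary asserts that the lower bounds $\Max(G-v)\ge\Max(G)-2$ and $\Min(G-v)\ge\Min(G)-2$ are attained, and your proposal does not establish this: you only list the constraints an example would have to satisfy and suggest searching among pendants attached to the $(r,r-1)$ graphs of the preceding proposition. That is a plan, not a proof, and it is not clear it works---adding a pendant edge to $K_{2r-1}$-with-pendant need not preserve the $\Max=\Min+1$ gap your own tightness analysis requires. The paper's construction is quite different: take $G=rK_2+C_6$ with $v$ an endpoint of one of the isolated edges, exploiting $\Max(C_6)=2<3=\Min(C_6)$. Since neither player wants to move first on the $6$-cycle, both exhaust the isolated edges first, and the parity of $r$ determines who is forced onto $C_6$; deleting $v$ flips that parity. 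For $r$ odd this yields $\Max(G-v)=r+1=\Max(G)-2$, and for $r$ even it yields $\Min(G-v)=r+1=\Min(G)-2$.
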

\begin{proof}
Theorem~\ref{at_most_one} yields the upper bounds for $G-v$, with equality
when $v$ is isolated.

The lower bounds hold (strictly) when $v$ is isolated, so we may assume that
$v$ has a neighbor $u$.  By Proposition~\ref{optimal} and
Theorem~\ref{at_most_one}(1),
$\gmmin(G) \le 1 + \gmmax(G - \{u,v\}) \le 2 + \gmmin(G - \{u,v\})$, and then
$\gmmin(G-v) \ge \gmmin(G - \{u,v\}) \ge \gmmin(G) - 2$ by
Theorem~\ref{monotone}(2).  For the other inequality, Proposition~\ref{optimal}
and Theorem~\ref{at_most_one} yield
$\gmmax(G-v) \ge \gmmax(G-\{u,v\}) \ge \gmmin(G) - 1 \ge \gmmax(G) - 2$. 

For sharpness of the lower bounds, let $G=rK_2+C_6$ with $r\ge1$, and let
$v$ be a vertex of an isolated edge.  Since $\Max(C_6)=2<3=\Min(C_6)$,
neither player wants to play on the $6$-cycle.  When $r$ is odd, the first
player will have to play on the $6$-cycle in $G-v$; when $r$ is even, it will
be the second player.  Hence $\Max(G-v)=r+1=\Max(G)-2$ when $r$ is odd, and
$\Min(G-v)=r+1=\Min(G)-2$ when $r$ is even.
\end{proof}

\section{Relation to Matching Number}\label{sec_match_gamematch}

We next study the relationship between the game matching number and the
ordinary matching number.  Although generally $\Max(G)<\ap(G)$, there is a
condition sufficient for equality.

\begin{theorem}\label{gamen2}
Fix an $n$-vertex graph $G$ and a maximum matching $M$ in $G$.  If
$uv \in E(G)$ implies $u'v' \in E(G)$ whenever $uu',vv' \in M$, then
$\gmmax(G) = \gmmin(G) = \ap(G)$.
\end{theorem}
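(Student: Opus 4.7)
The plan is to exhibit a strategy for Max that, in both the Max-start and Min-start games, forces the final matching to have size $\ap(G)$; combined with the trivial bounds $\Max(G),\Min(G)\le\ap(G)$, this yields all three equalities.

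The foundation is a simple preservation observation. If $G$ satisfies the hypothesis with matching $M$, if $M^*\subseteq M$, and if $G^*$ is any induced subgraph of $G$ whose vertex set contains $V(M^*)$, then $(G^*,M^*)$ also satisfies the hypothesis. Indeed, for any $e,f\in M^*$ the four vertices appearing in the hypothesis for $e,f$ all lie in $V(M^*)\subseteq V(G^*)$, so the implication ``$uv\in E(G)\Rightarrow u'v'\in E(G)$'' restricts directly from $G$ to $G^*$.

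Writing $G_{\mathrm{cur}}$ for the subgraph induced by the unsaturated vertices, Max maintains the invariant that, just after each of his moves, $G_{\mathrm{cur}}$ admits a maximum matching $M_{\mathrm{cur}}\subseteq M$ with $|M_{\mathrm{cur}}|=\ap(G)-k$, where $k$ is the number of moves played so far; the preservation observation then guarantees that $(G_{\mathrm{cur}},M_{\mathrm{cur}})$ satisfies the hypothesis. In the Max-start game, Max's opening move is any edge of $M$. Thereafter, Max responds to Min's last move $f$ by one of three rules. \emph{(A)} If $f\in M_{\mathrm{cur}}$: remove $f$ from $M_{\mathrm{cur}}$ and play any remaining edge of $M_{\mathrm{cur}}$. \emph{(B)} If $f=uv\notin M_{\mathrm{cur}}$ with $uu',vv'\in M_{\mathrm{cur}}$: the hypothesis applied inside $G_{\mathrm{cur}}$ yields $u'v'\in E(G_{\mathrm{cur}})$, so play $u'v'$ and remove both $uu'$ and $vv'$ from $M_{\mathrm{cur}}$. \emph{(C)} If $f=uv\notin M_{\mathrm{cur}}$ with only one endpoint matched in $M_{\mathrm{cur}}$, say $u$ unmatched and $vv'\in M_{\mathrm{cur}}$: remove $vv'$ from $M_{\mathrm{cur}}$ and play any remaining edge of $M_{\mathrm{cur}}$. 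The remaining possibility, that both endpoints of $f$ are unmatched by $M_{\mathrm{cur}}$, cannot occur since unmatched vertices of a maximum matching form an independent set.

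In each rule a short matching-size comparison, using that augmenting the updated ``remaining'' matching by the played edges would otherwise exceed $|M_{\mathrm{cur}}|=\ap(G_{\mathrm{cur}})$ or $|M|=\ap(G)$, verifies that the updated $M_{\mathrm{cur}}$ is again maximum in the updated $G_{\mathrm{cur}}$ and is still a subset of $M$, so the invariant persists. The game must end precisely when $|M_{\mathrm{cur}}|=0$ (an edge of $G_{\mathrm{cur}}$ would imply $\ap(G_{\mathrm{cur}})\ge1$), at which point $k=\ap(G)$ and the game matching has the desired size. The main obstacle, and the reason for the specific form of the hypothesis, is Rule (B): the ``obvious'' replacement matching $(M_{\mathrm{cur}}\setminus\{uu',vv'\})\cup\{u'v'\}$ generally fails the hypothesis, so the invariant would break if Max could not immediately consume $u'v'$; the hypothesis is tailored precisely so that this move is available on the same round.
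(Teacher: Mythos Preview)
Your argument is correct and follows essentially the same route as the paper's proof: the same three-case response for Max (edge in $M$, both endpoints matched, one endpoint matched), with the hypothesis invoked precisely in case (B) to supply the reply $u'v'$. The only difference is packaging---the paper phrases it as a clean induction on $|V(G)|$, while you maintain an explicit invariant; one small wrinkle is that in Rules (A) and (C) you should note that if no edge of $M_{\mathrm{cur}}$ remains after the deletion then $\ap(G_{\mathrm{cur}})=0$ and the game has simply ended, but this is implicit in your termination discussion.
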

\begin{proof}
Both claims hold by inspection when $n \le 4$; we proceed by induction on $n$.

Since neither $\gmmax(G)$ nor $\gmmin(G)$ can exceed $\ap(G)$, we need only
give strategies for Max.  Let $uv$ be the first edge played, and let
$G'=G-\{u,v\}$.  If $uv\in M$, then $\ap(G')=\ap(G)-1$, and $M-\{uv\}$
satisfies the hypothesis for $G'$, so the induction hypothesis applies.

Hence in the Max-start game it suffices to choose $uv\in M$, and in the
Min-start game we may assume $uv\notin M$.  If $u$ and $v$ are not both covered
by $M$, then $\ap(G')=\ap(G)-1$ (or $G$ would have a larger matching); again 
the induction hypothesis applies.

In the remaining case, there exist $uu',vv'\in M$.  By hypothesis,
$u'v' \in E(G)$; Max responds by playing $u'v'$.  Now $G-\{u,v,u',v'\}$
satisfies the hypothesis (using $M-\{uu',vv'\}$), so the induction hypothesis
yields
$
\gmmin(G) \ge 2+\gmmin(G-\{u,v,u',v'\}) = 2+\ap(G)-2.
$
\end{proof}

The property we require of $G$ in Theorem~\ref{gamen2} is restrictive, but if
$G$ has a perfect matching $M$ satisfying the hypothesis of
Theorem~\ref{gamen2}, then so does the cartesian product $G \cart H$, for any
graph $H$ (using the perfect matching in $G \cart H$ formed by the copies of
$M$).  For example:

\begin{corollary}
For $r\ge1$ and any graph $H$, Max can force a perfect matching
in $K_{r,r}\cart H$, no matter who plays first.
\end{corollary}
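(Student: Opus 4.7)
The plan is to invoke Theorem~\ref{gamen2} together with the remark immediately preceding the corollary, so it suffices to exhibit a perfect matching of $K_{r,r}$ that satisfies the hypothesis of Theorem~\ref{gamen2}; the remark then promotes this to a perfect matching of $K_{r,r}\cart H$ with the same property.

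First I would fix the bipartition $A\cup B$ of $K_{r,r}$ with $|A|=|B|=r$ and any bijection $\phi\colon A\to B$, and take $M_0=\{a\phi(a)\st a\in A\}$. To verify the hypothesis, suppose $uv\in E(K_{r,r})$ and $uu',vv'\in M_0$. Then $u$ and $v$ lie in opposite parts, so without loss of generality $u\in A$ and $v\in B$; the matching edges at $u$ and $v$ then force $u'\in B$ and $v'\in A$, and hence $u'v'\in E(K_{r,r})$ because $K_{r,r}$ contains every edge between $A$ and $B$. Thus $M_0$ satisfies the hypothesis.

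Next I would apply the observation quoted before the corollary: if $G$ has a perfect matching $M$ satisfying the hypothesis of Theorem~\ref{gamen2}, then the ``copies of $M$'' matching $M^\star=\{((x,h),(y,h))\st xy\in M,\ h\in V(H)\}$ is a perfect matching of $G\cart H$ satisfying the same hypothesis. (One checks this by splitting edges of $G\cart H$ into the two types, $G$-edges within a single copy of $G$ and $H$-edges within a single copy of $H$; the $G$-case reduces to the hypothesis on $M$, and the $H$-case follows because both matching partners share the same $H$-coordinate, so the required edge is an $H$-edge in another copy of $H$.) Applying this to $G=K_{r,r}$ and $M=M_0$ gives such a perfect matching in $K_{r,r}\cart H$, and Theorem~\ref{gamen2} then yields $\gmmax(K_{r,r}\cart H)=\gmmin(K_{r,r}\cart H)=\ap(K_{r,r}\cart H)=r|V(H)|$, which is half the number of vertices. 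Hence the game ends in a perfect matching regardless of who starts.

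There is no real obstacle: once $M_0$ is verified to satisfy the hypothesis for $K_{r,r}$, both the product construction and the invocation of Theorem~\ref{gamen2} are essentially mechanical. The only subtlety worth stating carefully is the case analysis for the two edge types in $K_{r,r}\cart H$, but this is already encapsulated in the remark the authors state before the corollary.
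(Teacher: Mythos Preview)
Your proposal is correct and follows exactly the route the paper intends: verify that any perfect matching of $K_{r,r}$ satisfies the hypothesis of Theorem~\ref{gamen2}, then use the stated remark that this property passes to copies of $M$ in $G\cart H$, and finally invoke Theorem~\ref{gamen2}. The paper leaves all of this implicit (presenting the corollary with no separate proof), so your write-up simply fills in the details of the same argument.
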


Theorem~\ref{gamen2} does not allow Max to force a perfect matching in all
cartesian products that have perfect matchings.

\begin{example}
Let $G$ be the ``paw'', obtained from a triangle with vertex set $u,v,w$ by
adding one vertex $x$ with neighbor $w$.  In $G\cart P_3$, where $P_3$ is the
path with vertices $a,b,c$ in order, Max cannot force a perfect matching no
matter who starts.  Min can start by playing the edge joining $(w,a)$ and
$(v,a)$, threatening to isolate $(u,a)$ or $(x,a)$.  Max cannot prevent Min
from isolating one of these vertices on the next turn.  Similar analysis
applies to the Max-start game on $G\cart P_3$ and to both games on $G\cart G$.
\qed
\end{example}

Next we consider how small $\gmmax(G)$ can be in terms of $\match(G)$; we
prove a general lower bound and show that it is sharp.  A {\it round} of play
consists of a move by Max followed by a move by Min.  We call
$G - \UE i1k \{u_i,v_i\}$ the \emph{residual graph} after edges
$u_1v_1,\ldots,u_kv_k$ are played in the matching game on $G$.

\begin{theorem}\label{general_lower}
$\gmmax(G) \ge \FR23\match(G)$ for every graph $G$.
\end{theorem}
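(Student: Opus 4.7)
The plan is to give Max an explicit strategy tied to a fixed maximum matching $M$ of $G$, then bound the game length by tracking how fast edges of $M$ are destroyed in the residual graph. Max's rule is: on each of his turns, play an edge of $M$ that still lies in the residual graph, or, if no such edge exists, any available edge. Since the residual graph only shrinks, once no edge of $M$ survives in it, none ever reappear, so Max's $M$-moves occupy his first $r$ turns for some $r\ge 0$, and any remaining Max moves are played outside $M$.

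The key bookkeeping is this: a Max move drawn from $M$ destroys exactly one $M$-edge (its two endpoints belong to the same $M$-edge and to no other), whereas any Min move destroys at most two $M$-edges (each endpoint lies in at most one $M$-edge). Hence after Max's first $r$ moves (all in $M$) and Min's first $k$ moves, at most $r+2k$ edges of $M$ have been destroyed, so at least $\match(G)-r-2k$ edges of $M$ remain in the residual graph.

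Under this strategy the game ends in one of three ways, and each yields $\gmmax(G)\ge\tfrac{2}{3}\match(G)$. (a) The game ends on Min's $r$-th move: then no edge, and in particular no $M$-edge, survives, so $r+2r\ge\match(G)$ and the final matching size is $f=2r\ge\tfrac{2}{3}\match(G)$. (b) Max plays an $(r{+}1)$-st move that lies outside $M$; this can happen only when no $M$-edge survives after Min's $r$-th move, so again $3r\ge\match(G)$ and $f\ge 2r+1\ge\tfrac{2}{3}\match(G)$. (c) The game ends on Max's $r$-th $M$-move, with Min having moved $r-1$ times: here $r+2(r-1)\ge\match(G)$, so $r\ge(\match(G)+2)/3$ and $f=2r-1\ge\tfrac{2}{3}\match(G)+\tfrac{1}{3}$. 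The main subtlety is keeping these terminal cases straight --- especially the parity case where the game stops on Max's turn with only $r-1$ Min moves --- but the arithmetic itself is routine, and no deeper structural argument is needed.
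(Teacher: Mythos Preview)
Your proof is correct and follows essentially the same approach as the paper: Max commits to playing matching edges, and you count how quickly those edges can be eliminated (one per Max move, at most two per Min move), giving the $2/3$ ratio.  The only cosmetic difference is that you fix a single maximum matching $M$ of $G$ at the outset and track surviving $M$-edges, whereas the paper has Max pick an edge of a maximum matching of the \emph{current residual graph} and tracks the residual matching number $\alpha'$ directly; your explicit three-case endgame analysis replaces the paper's terse remark about $\alpha'=2$.
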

\begin{proof}
As long as an edge remains, Max plays an edge belonging to a maximum matching;
this reduces the matching number by $1$.  When $\ap(G)\ge3$, the edge played by
Min in response is incident to at most two edges of a maximum matching and hence
reduces the matching number (of the residual graph) by at most $2$.  Hence a
round reduces $\ap$ by at most $3$ while adding $2$ to the number of edges
played.  When $\ap(G)=2$ and Max starts, two more edges will be played.
\end{proof}

Before proving that Theorem~\ref{general_lower} is sharp, we pause to show
that Min has a similar strategy using a small maximal matching to place an
upper bound on the outcome.

\begin{theorem}\label{general_upper}
$\gmmax(G)-1\le\gmmin(G) \le \FR32\almin(G)$ for every graph $G$.
\end{theorem}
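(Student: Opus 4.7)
The first inequality $\gmmax(G)-1\le\gmmin(G)$ is the lower half of Theorem~\ref{at_most_one}(1), so the work is the bound $\gmmin(G)\le\FR32\almin(G)$. The plan is to exhibit a strategy for Min that is dual to Max's strategy in Theorem~\ref{general_lower}. Fix a minimum maximal matching $M^*$ with $|M^*|=\mu$; on each Min turn, if some edge of $M^*$ still has both endpoints unmatched, Min plays such an edge; otherwise Min plays any available edge.

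Call an edge of $M^*$ \emph{killed} once at least one of its endpoints is matched, and partition the game into Phase~1 (some $M^*$-edge remains unkilled at the start of the move) and Phase~2 (all are killed). No move in Phase~1 can be the final move of the game, because any unkilled $M^*$-edge remains playable. Let $b_1$ and $a_1$ count Min's and Max's Phase~1 moves; since Min starts, $a_1\le b_1$. Let $m$ count Max's Phase~1 moves whose played edge lies entirely inside $V(M^*)$.

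A Min move in Phase~1 plays an $M^*$-edge and therefore kills exactly one $M^*$-edge. A Max move kills at most two $M^*$-edges, and kills two only when its two endpoints lie in distinct unkilled $M^*$-edges; this forces such a move into the count $m$. Hence Max's Phase~1 kills total at most $a_1+m$. Since all $\mu$ edges of $M^*$ are killed by the end of Phase~1, $\mu-b_1\le a_1+m\le b_1+m$, so $2b_1+m\ge\mu$, and since $m\ge0$ this yields $b_1+m\ge\mu/2$.

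For Phase~2 I invoke the maximality of $M^*$: $V(G)\setminus V(M^*)$ is an independent set, so every edge of $G$ uses at least one vertex of $V(M^*)$. Let $S$ be the set of vertices matched in Phase~1. Counting endpoints in $V(M^*)$, Min's $b_1$ edges and Max's $m$ edges contribute $2$ each, while the remaining $a_1-m$ Max edges contribute exactly $1$, so $|S\cap V(M^*)|=2b_1+a_1+m$ and therefore $|V(M^*)\setminus S|=2\mu-2b_1-a_1-m$. Every edge playable in Phase~2 uses at least one vertex of $V(M^*)\setminus S$, so the Phase~2 matching has at most that many edges, giving
\[
\gmmin(G)\;\le\;(a_1+b_1)+(2\mu-2b_1-a_1-m)\;=\;2\mu-(b_1+m)\;\le\;\FR32\mu.
\]
The main obstacle is the ``kills two forces both endpoints in $V(M^*)$'' step that converts the Phase~1 kill accounting into $b_1+m\ge\mu/2$; once that is in hand, the independence of $V(G)\setminus V(M^*)$ makes the Phase~2 vertex count pure bookkeeping.
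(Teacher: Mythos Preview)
Your argument is correct and follows the same line as the paper's: Min plays edges of a fixed minimum maximal matching $M^*$ whenever possible, and the bound comes from counting how many vertices of $V(M^*)$ remain available once no $M^*$-edge is playable. Your bookkeeping via the parameter $m$ is a more explicit version of the paper's terser round-count (the paper asserts $k\ge\mu/2$ where $k$ is essentially your $b_1$; your inequality $b_1+m\ge\mu/2$ is the clean form of that step). One harmless slip: the remark that ``no move in Phase~1 can be the final move'' is false (take $G=K_2$), but you never use it---$a_1\le b_1$ follows directly from Min moving first, and your displayed bound already covers an empty Phase~2 since $2\mu-2b_1-a_1-m=|V(M^*)\setminus S|\ge 0$.
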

\begin{proof}
Let $T$ be the set of $2\almin(G)$ vertices covered by a smallest maximal
matching $M$.  Min plays in $M$ when possible, using two vertices of $T$.
Max plays some edge, which uses at least one vertex of $T$.
This continues for $k$ rounds, where $k\ge\almin(G)/2$, using at least
$3k$ vertices of $T$ and making $2k$ moves, after which no edges remain in $M$.
Subsequently, at most $2\almin(G)-3k$ moves remain, since each move uses a
vertex of $T$.  Hence the total number of moves played is at most
$2\almin(G)-k$, which is at most $\FR32\almin(G)$.
\end{proof}

Sharpness of Theorem~\ref{general_lower} is shown by $rP_4$ with $r$ even;
Max can guarantee that at least half of the components contribute two edges.
Next we consider sharpness of Theorem~\ref{general_lower}.  A {\em split graph}
is a graph whose vertex set can be partitioned into a clique and an independent
set.  We present a Min strategy for the matching game on split graphs.  On many
split graphs, this strategy achieves equality in Theorem~\ref{general_lower}.

\begin{proposition}\label{split}
Let $G$ be a split graph.  If $V(G) = S \cup T$, with $S$ an independent set
and $T$ a clique, then $\gmmax(G) \le \ceil{\FR23\size{T}}$.  
\end{proposition}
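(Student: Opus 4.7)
The plan is to give Min an explicit strategy and then count the vertices of $T$ consumed during play. Since $T$ is a clique, any two currently unmatched vertices of $T$ are joined by an edge of $G$, and that edge is always a legal move. Min's strategy will be: whenever two unmatched vertices of $T$ exist, play the edge between them; otherwise play any legal edge.

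Next I would establish the structural observation that under this strategy, Min plays outside $T$ at most once, and such a move must be the final move of the game. The reason is that $S$ is independent, so every edge of $G$ has an endpoint in $T$; hence at game's end (when the matching is maximal) at most one vertex of $T$ can be unmatched. So Min lacks two unmatched $T$-vertices only when a unique such vertex $v$ remains, and Min's forced move $vu$ to some unmatched $u \in S$ leaves no unmatched $T$-vertex, ending the game immediately.

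To finish, let $m$ denote the final matching size, write $m = a + b$ with $a$ and $b$ the numbers of Max and Min moves (so $a \in \{b, b+1\}$), and let $c$ count matched edges contained in $T$. Since every edge uses at least one $T$-vertex and edges inside $T$ use two, $m + c \le \size{T}$. The structural observation yields $c \ge b - 1$ in general, with $c \ge b$ whenever $a = b+1$ (in that case Max made the last move, so Min never played outside $T$). Substituting gives $m \le (2\size{T}+2)/3$ when $a = b$ and $m \le (2\size{T}+1)/3$ when $a = b+1$; in both cases, integrality of $m$ yields $m \le \ceil{\FR23\size{T}}$. The main delicacy is this last step of bookkeeping: one must track Min's moves inside $T$ alongside all matched edges contained in $T$, and verify that the two fractional bounds round down to the desired ceiling in each residue class of $\size{T}$ modulo $3$.
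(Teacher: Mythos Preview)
Your proof is correct and follows essentially the same approach as the paper: both use the identical Min strategy (play inside $T$ whenever possible) and count $T$-vertices consumed, exploiting that every edge meets $T$. The only difference is bookkeeping---the paper argues round by round (each round uses at least three $T$-vertices) and defers the last one or two $T$-vertices to an unspecified ``small cases'' check, whereas your global count via $m + c \le |T|$ together with the observation that Min's sole move outside $T$ (if any) is terminal handles the endgame uniformly and makes the ceiling verification explicit.
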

\begin{proof}
On each turn, Min plays an edge joining two vertices of $T$ if possible, and
any legal move otherwise.
By the choice of $S$ and $T$, every edge in $G$ has at least one endpoint in
$T$.  Thus each move by Max covers at least one vertex of $T$, and each move
by Min covers two vertices of $T$ while two remain.  Thus each round increases
the size of the matching by $2$ and decreases $|T|$ by at least $3$, until
at most two vertices remain in $T$; the small cases are then checked explicitly.
\end{proof}

If a clique $T$ contains an endpoint of every edge in $G$, then
$\match(G) \le \size{T}$.  If $\ap(G)=|T|$ and $\size{T}\equiv 0\mod 3$, then
the lower bound in Proposition~\ref{split} matches the upper bound in
Theorem~\ref{general_lower}.  Thus equality holds in Theorem~\ref{general_lower}
for such split graphs.  We can introduce all the edges joining the
clique and the independent set, as in the next example.

\begin{example}\label{general_lower_tight}
For $k\ge1$ and $n\ge6k$, form $G$ from $K_n$ by deleting the edges of a
complete subgraph with $n-3k$ vertices; $G$ is a split graph whose clique $T$
has $3k$ vertices.  Note that $\match(G)=3k$ and $\Max(G)=2k$.  In fact, also
$G$ is $3k$-connected.
\qed
\end{example} 

The graph in Example~\ref{general_lower_tight} has the most edges among
$n$-vertex graphs such that $\ap(G)=3k$ and $\Max(G)=2k$.  To prove this, we
need several lemmas about ordinary matching.  The first is a special case of
a more difficult result of Brandt~\cite{Bra}.  The special case has a short,
self-contained proof.  Let $\delta(G)$ denote the minimum vertex degree in $G$.

\begin{lemma}\label{large_matching}
If $G$ is an $n$-vertex graph, then $\ap(G)\ge\min\{\floor{n/2},\delta(G)\}$.
\end{lemma}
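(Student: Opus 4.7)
The plan is to prove the contrapositive by a standard augmenting-path style argument. Suppose, for contradiction, $M$ is a maximum matching with $|M| < \min\{\lfloor n/2\rfloor,\delta(G)\}$. From $|M|<\lfloor n/2\rfloor$ we obtain that at least two vertices are unmatched; pick two of them, say $u$ and $v$. Since $M$ is maximum, no edge joins two unmatched vertices, so $uv\notin E(G)$, and moreover $N(u)\cup N(v)\subseteq V(M)$.

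Next I would exploit maximality edge by edge. Fix any $ab\in M$. If $u\sim a$ and $v\sim b$, then $(M\setminus\{ab\})\cup\{ua,vb\}$ is a larger matching, contradicting maximality; the symmetric configuration $u\sim b$, $v\sim a$ is forbidden for the same reason. A brief check of cases shows this forces
\[
|N(u)\cap\{a,b\}|+|N(v)\cap\{a,b\}|\le 2
\]
for every $ab\in M$ (any configuration whose sum is $3$ or $4$ contains one of the two forbidden cross-patterns).

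Summing this inequality over the edges of $M$ and using $N(u)\cup N(v)\subseteq V(M)$, I get
\[
d(u)+d(v)=\sum_{ab\in M}\bigl(|N(u)\cap\{a,b\}|+|N(v)\cap\{a,b\}|\bigr)\le 2|M|.
\]
Combined with $d(u),d(v)\ge\delta(G)$, this yields $|M|\ge\delta(G)$, contradicting $|M|<\delta(G)$. Hence $\alpha'(G)\ge\min\{\lfloor n/2\rfloor,\delta(G)\}$.

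The only delicate point is the case analysis at the per-edge level that upgrades the trivial bound of $4$ to the needed bound of $2$; everything else is a one-line consequence of maximality and the pigeonhole-style sum. I do not anticipate any serious obstacle, since this is really a streamlined maximum-matching argument; the statement is just the $k=1$ instance of a more general theorem of Brandt, and the independence of unmatched vertices together with the two-unmatched-vertex augmenting move is exactly what is available.
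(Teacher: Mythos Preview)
Your proof is correct and follows essentially the same approach as the paper's. Both arguments pick two $M$-unsaturated vertices $u,v$, note that their neighbors lie in $V(M)$, and use the augmenting observation that three or more edges from $\{u,v\}$ to a single matching edge $ab$ must contain a cross-pattern yielding a larger matching; the paper phrases this via pigeonhole on the total degree sum, while you phrase it as a per-edge bound of $2$ summed over $M$, but the content is identical.
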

\begin{proof}
Let $M$ be a maximum matching in $G$.  Suppose that $|M|<\floor{n/2}$, and let
$u$ and $v$ be distinct vertices not covered by $M$.  By the maximality of $M$,
all neighbors of $u$ or $v$ are covered by $M$.  If $d(u)+d(v)>2|M|$, then at
least three edges join some edge $xy\in M$ to $\{u,v\}$.  Now $xy$ can be
replaced with two edges from $\{x,y\}$ to $\{u,v\}$ to form a bigger matching.
Hence $d(u)+d(v)\le 2|M|$, and thus $\ap(G)\ge\min\{d(u),d(v)\}\ge\delta(G)$.
\end{proof}

The next result was observed by Plummer~\cite{P}.

\begin{lemma}\label{plummer}
If $G$ has $n$ vertices, and $\delta(G)\ge\FL{n/2}+1$, then every edge of
$G$ lies in a matching of size $\FL{n/2}$.
\end{lemma}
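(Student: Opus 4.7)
The plan is to reduce immediately to the previous lemma by deleting the endpoints of the given edge. Fix an edge $e = xy \in E(G)$, and consider the graph $G' = G - \{x,y\}$. It suffices to produce a matching of size $\lfloor n/2 \rfloor - 1$ in $G'$, since appending $e$ then yields a matching of size $\lfloor n/2 \rfloor$ in $G$ containing $e$.

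To produce that matching, I would invoke Lemma~\ref{large_matching} on $G'$. The number of vertices is $n-2$, so $\lfloor |V(G')|/2 \rfloor = \lfloor n/2 \rfloor - 1$ (one checks both parities). The hypothesis $\delta(G) \ge \lfloor n/2 \rfloor + 1$ means that every remaining vertex loses at most $2$ neighbors when $x$ and $y$ are deleted, so $\delta(G') \ge \lfloor n/2 \rfloor - 1$. Therefore Lemma~\ref{large_matching} gives $\ap(G') \ge \min\{\lfloor n/2 \rfloor - 1,\ \lfloor n/2 \rfloor - 1\} = \lfloor n/2 \rfloor - 1$, as required.

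There is no real obstacle here; the content of the statement has already been packed into Lemma~\ref{large_matching}, and the only thing to verify is the bookkeeping on $\lfloor (n-2)/2 \rfloor$ and on how much the minimum degree can drop when two vertices are removed. The proof is therefore essentially a two-line reduction.
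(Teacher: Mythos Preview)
Your proof is correct and is essentially identical to the paper's own argument: delete the two endpoints of the given edge, observe that the minimum degree drops by at most $2$ to at least $\lfloor(n-2)/2\rfloor$, apply Lemma~\ref{large_matching} to get a matching of size $\lfloor(n-2)/2\rfloor=\lfloor n/2\rfloor-1$ in the remainder, and reinstate the edge. The paper's version is just terser.
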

\begin{proof}
For $uv\in E(G)$, let $G'=G-\{u,v\}$.  Since $\delta(G')\ge\FL{(n-2)/2}$,
Lemma~\ref{large_matching} implies $\ap(G')\ge \FL{(n-2)/2}$.  Replace $uv$.
\end{proof}

\begin{lemma}\label{vertex_matched}
If $v$ is a non-isolated vertex of a graph $G$, then some maximum matching in
$G$ contains an edge incident to $v$.
\end{lemma}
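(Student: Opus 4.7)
The plan is to argue by a one-edge swap starting from any maximum matching. Let $M$ be any maximum matching in $G$. If $M$ already covers $v$, then the edge of $M$ incident to $v$ witnesses the claim, and we are done immediately. So the interesting case is when $v$ is not saturated by $M$.

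In that case I would use the non-isolation hypothesis to pick a neighbor $u$ of $v$. Since $M$ is maximum, adding $uv$ to $M$ cannot produce a matching, so $u$ must be covered by $M$; let $uw$ be the edge of $M$ incident to $u$. Then $M' = (M \setminus \{uw\}) \cup \{uv\}$ is a set of $|M|$ pairwise disjoint edges (the only edge of $M$ touching $u$ was $uw$, and $v$ was uncovered by $M$), so $M'$ is again a maximum matching, and it contains the edge $uv$ incident to $v$.

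There is no real obstacle here: the argument is a single augmenting-style swap, and it does not need any of the earlier machinery in the section. The only point worth stating carefully is that $M'$ really is a matching, which follows because removing $uw$ frees both $u$ and $w$, and the new edge $uv$ uses $u$ together with the previously uncovered vertex $v$. Thus no two edges of $M'$ share an endpoint, and $|M'| = |M| = \ap(G)$, completing the proof.
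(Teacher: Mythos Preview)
Your proof is correct and is essentially the same one-edge swap argument the paper gives: take a maximum matching, observe it must cover $u$ or $v$ for any edge $uv$, and if it covers $u$ but not $v$, replace the edge at $u$ by $uv$. The paper states it more tersely, but the content is identical.
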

\begin{proof}
When $uv$ is an edge, a maximal matching must cover $u$ or $v$.  If it covers
$u$ and not $v$, then the edge covering $u$ can be replaced with $uv$.
\end{proof}

\begin{theorem}
If $G$ is an $n$-vertex graph with $\match(G) = 3k$ and $\gmmax(G) = 2k$,
then $\size{E(G)} \le \binom{3k}{2} + 3k(n-3k)$.
\end{theorem}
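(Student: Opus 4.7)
The target bound equals $\binom{n}{2}-\binom{n-3k}{2}$, so it is the maximum number of edges in an $n$-vertex graph with an independent set of size $n-3k$, equivalently a vertex cover of size $3k$. Since $\tau(G)\ge\ap(G)=3k$ in any graph, the theorem reduces to producing a vertex cover of $G$ of size exactly $3k$ under the hypothesis $\Max(G)=2k$.

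Fix a maximum matching $M=\{a_ib_i\colon i=1,\ldots,3k\}$ and set $U=V(G)\setminus V(M)$. Maximality of $M$ makes $U$ independent, and a standard augmenting-path argument shows that for each $e_i$ either only one of $\{a_i,b_i\}$ has a neighbour in $U$, or both share a single common $U$-neighbour (the exceptional case, to be handled by a local swap in $M$). The plan is to pick one vertex $c_i\in e_i$ from each $e_i$ so that $T=\{c_1,\ldots,c_{3k}\}$ is a vertex cover; equivalently, so that the complementary set $(V(M)\setminus T)\cup U$ is independent.

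The $c_i$'s will be read off from the tight case of Theorem~\ref{general_lower}. Max's strategy of always playing an edge of a current maximum matching forces at least $2k$ moves, so the hypothesis $\Max(G)=2k$ demands that every Min response drop the residual matching number by exactly $2$: Min's edge must have both endpoints in previously-untouched $M$-edges, and after $k$ rounds the residual graph must be edgeless. This already yields an independent set of size $n-4k$ consisting of the surviving $V(M)$-endpoints together with $U$. Varying Max's first move over different $M$-edges (optimality of each being governed by Proposition~\ref{optimal} together with Lemma~\ref{vertex_matched}, which ensures every edge of $M$ lies in some maximum matching) produces, for every $e_i$, a designated ``Min vertex'' $c_i\in e_i$; taking $T=\{c_i\}$ ought to upgrade the independent set to size $n-3k$.

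The main obstacle is proving that the $c_i$'s form a globally consistent labelling rather than one that varies with Max's play order. I would resolve this by a stitching argument: if Min were to respond inconsistently to two distinct Max openings, Max could combine the opening from one strategy with the continuation from another to splice together a game of length $2k+1$, contradicting $\Max(G)=2k$. Exceptional $M$-edges with a common $U$-neighbour are absorbed by a local swap of $M$ that does not change $\ap(G)$ and places that common vertex into $T$. Once $T$ is shown to be a vertex cover of size $3k$, the edge bound follows immediately from the clique-plus-bipartite decomposition of an $n$-vertex graph with a vertex cover of size $3k$.
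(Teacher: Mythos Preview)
Your reformulation of the target bound as $\binom{n}{2}-\binom{n-3k}{2}$ is correct, but the reduction you propose---producing a vertex cover of size exactly $3k$---is a strictly stronger statement than the theorem, and it is \emph{false}.  Take $k=1$, $n=6$, and $G=K_{2,2,2}$ (equivalently $K_6$ minus a perfect matching).  Here $\ap(G)=3$, and by the construction in Proposition~2.1 (the case $K_{4k+2}$ minus a perfect matching with $k=1$) we have $\Max(G)=2$, so the hypotheses hold.  Moreover $|E(G)|=12=\binom{3}{2}+3\cdot3$, so the theorem's bound is met with equality.  Yet $\alpha(K_{2,2,2})=2$, so $\tau(G)=4>3=3k$.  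Thus no choice of endpoints $c_i$ from the matching edges can produce a vertex cover of size $3$, and the entire program of extracting a consistent labelling from Min's optimal responses is aimed at a conclusion that simply does not hold.

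The paper's proof avoids this trap by never attempting a structural characterisation of $G$.  Instead it runs an induction on $k$: one shows that Max has a first move $uv$ (a ``good edge'') lying in a maximum matching with $d(u)+d(v)\le n-1+3k$, so that the first round deletes at most $3n+3k-9$ edges while forcing the residual graph $G'$ to satisfy $\ap(G')=3(k-1)$ and $\Max(G')=2(k-1)$.  The existence of a good edge is handled via Lemmas~\ref{large_matching}--\ref{vertex_matched} together with a short case analysis when $\delta(G)>3k$.  This edge-counting induction is what you need; the vertex-cover route cannot be repaired without additional hypotheses on $G$.
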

\begin{proof}
We use induction on $k$.  For $k = 0$, the claim is $\size{E(G)} \le 0$, as
required by $\ap(G)=0$.
\looseness-1

For $k\ge1$, we seek an edge $uv$ for Max to play such that
$\ap(G-\{u,v\})=\ap(G)-1$ and $d(u)+d(v)\le n-1+3k$; call this a {\it good
edge}.  Playing a good edge eliminates at most $n-2+3k$ edges from the residual
graph.  With $n-2$ vertices remaining, the next move by Min eliminates at most
$2n-7$ more edges.

Let $G'$ be the residual graph after Max plays a good edge $uv$ and Min then
plays an optimal move.  By Prop\-osition~\ref{optimal},
$\gmmax(G) \ge 2 + \gmmax(G')$, so $\gmmax(G') \le 2k-2$.  Playing $uv$ reduces
the matching number only by $1$, and the move by Min reduces it by at most $2$,
so $\match(G') \ge 3k-3$.  Now Theorem~\ref{general_lower} yields
$\gmmax(G') = 2k-2$ and $\match(G') = 3k-3$.  Thus the induction hypothesis
applies to $G'$.  Adding the edges of $G$ not in $G'$ yields
\begin{align*}
\size{E(G)} &\le \size{E(G')} + 3n+3k-9
        \le \binom{3(k\!-\!1)}{2} + 3(k\!-\!1)(n\!-\!4-3(k\!-\!1)) + 3n+3k-9\\
              &= \frac{(3k-3)(3k-4)}{2} + 9k-6+3k(n-3k)
              = \binom{3k}{2} + 3k(n-3k).
\end{align*}

In most cases, $G$ has a good edge.  We may assume that $G$ has no isolated
vertices, since discarding them does not affect the matching number, the game
matching number, or the number of edges, and the edge bound for the smaller
graph is less than what we allow for $G$.  If $\delta(G)\le 3k$, then let $v$
be a vertex of minimum degree.  Lemma~\ref{vertex_matched} implies that some
maximum matching contains an edge $uv$ incident to $v$, and $d(v)\le 3k$
implies $d(u)+d(v)\le n-1+3k$.  Hence $uv$ is good.

If $n\ge 6k+2$, then $\FL{n/2}>3k$, and Lemma~\ref{large_matching} implies
$\delta(G) \le 3k$.  Since $\ap(G)=3k$ implies $n\ge6k$, we may henceforth
assume $n\in\{6k,6k+1\}$ and $\delta(G)\ge 3k+1=\FL{n/2}+1$.

Consider first the case $\delta(G)\ge\FL{n/2}+2$, and let $G'$ be the residual
graph after the first move by Max.  Each remaining vertex loses at most two
incident edges, so $\delta(G')\ge\FL{(n-2)/2}+1$.  Lemma~\ref{plummer} then
implies that every edge of $G'$ lies in a matching of size $\FL{(n-2)/2}$.
Thus even after the subsequent move by Min, the matching number is still at
least $\FL{n/2}-2$.  In other words, the first two moves of the game produce a
residual graph $G''$ with $\ap(G'')\ge 3k-2$.  By Theorem~\ref{general_lower},
$\Max(G'')>2k-2$, and hence $\Max(G)>2k$.

The remaining case is $\delta(G)=3k+1=\FL{n/2}+1$.  Every edge lies in a 
matching of size $3k$ (by Lemma~\ref{plummer}), so any edge with degree-sum at
most $n-1+3k$ is a good edge.  For a vertex $v$ of minimum degree, we conclude
that every neighbor $u$ of $v$ has degree at least $n-1$.  Since $n-1\ge3k+2$
when $n\ge6k\ge6$, we have $\delta(G-\{u,v\})\ge3k=\FL{(n-2)/2}+1$, and again
Min cannot reduce the matching number by $2$ after Max plays $uv$.
\end{proof}

\section{Forests}\label{forests}

In this section, we study the matching game on forests.  Although the lower
bound in Theorem~\ref{general_lower} is sharp infinitely often, we improve it
to $\Max(F)\ge\FR34\ap(F)$ when $F$ is a forest.  We also prove the natural
property that $\gmmin(F)\le \gmmax(F)$ when $F$ is a forest, which we apply in
the next section.

\begin{theorem}\label{forestbound}
If $F$ is a forest, then $\Max(F)\ge\FR34\ap(F)$.
\end{theorem}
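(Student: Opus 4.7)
The plan is to prove $\Max(F) \ge \tfrac{3}{4}\alpha'(F)$ by induction on $|V(F)|$, giving an explicit strategy for Max. Isolated vertices may be ignored, so assume $F$ has none. Since Theorem~\ref{general_lower} already gives the ratio $2/3$, the improvement must come from exploiting the acyclicity of $F$ to restrict Min's ability to make ``spoiler'' moves, by which I mean non-matching edges whose removal drops $\alpha'$ by $2$.

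For the inductive step, Max should play an edge $uv$ with $u$ a leaf. Because $u$ is a leaf, $uv$ lies in some maximum matching of $F$, so $\alpha'(F - \{u,v\}) = \alpha'(F) - 1$. If Min responds with $xy$ and we set $F'' = F - \{u,v,x,y\}$, then Proposition~\ref{optimal} combined with the inductive bound on the forest $F''$ yields $\Max(F) \ge 2 + \tfrac{3}{4}\alpha'(F'')$. To close the induction it therefore suffices to guarantee $\alpha'(F) - \alpha'(F'') \le 8/3$, i.e., at most $2$ since $\alpha'$ is integer-valued. This is automatic unless $xy$ is a spoiler edge, one for which both $x$ and $y$ are saturated by edges of a maximum matching of $F - \{u,v\}$.

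The main obstacle is handling these spoiler cases. To control them, Max chooses $uv$ at the end of a longest path $v_0 v_1 \cdots v_\ell$ in $F$, setting $u = v_0$ and $v = v_1$, and then we carry out a case analysis on the neighborhood of $v_1$: whether $v_1$ has further leaf neighbors, whether $v_1$ has degree $2$, and the local structure around $v_2$. In each case I expect to show that either Min has no spoiler response in $F - \{v_0,v_1\}$, or that when she plays one, the forest that remains admits an improved bound for Max which amortizes against the extra loss. Making this amortization rigorous will almost certainly require strengthening the induction hypothesis to an inequality of the form $\Max(F) \ge \tfrac{1}{4}\bigl(3\alpha'(F) + \varepsilon(F)\bigr)$ with an integer correction $\varepsilon(F) \ge 0$ that records structural features of $F$ such as the number of ``easy'' pendant components. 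The hard part will be defining $\varepsilon$ so that the inductive step closes uniformly across every branch of the case split, since the spoiler case is exactly where the straightforward $2/3$ argument loses, and the $3/4$ target leaves no slack without such a correction term.
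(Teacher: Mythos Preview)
Your proposal is not yet a proof: you explicitly leave the correction term $\varepsilon$ undefined and defer the case analysis, and without $\varepsilon$ the induction genuinely does not close.  For instance, take the comb obtained from $P_4$ by attaching a pendant leaf at each vertex; here $\alpha'(F)=4$ and the target is $\Max(F)\ge 3$.  With your strategy Max plays the pendant edge at the end of the longest path, and then Min can play the central edge of the spine, leaving a single edge plus isolated vertices.  The plain inductive bound now gives only $2+\tfrac34\cdot 1<3$.  The actual game value of the residual is $1$, not $\tfrac34$, so the strategy is fine---but your inductive hypothesis is too weak to see it, and you are left needing exactly the ``banked'' $\tfrac14$ that your undefined $\varepsilon$ is supposed to supply.

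The paper avoids this difficulty by a different strategic choice and a global amortized count rather than induction.  Max does \emph{not} play the pendant edge.  Writing the end of a longest path as $x,w,v,\ldots$, Max plays an edge incident to the \emph{third} vertex $v$ that lies in a maximum matching of $F-wv$.  Because $x$ ends a longest path, all neighbors of $w$ other than $v$ are leaves, so this move detaches a star centred at $w$.  That star will contribute exactly one edge whenever it is eventually played, and that edge reduces $\alpha'$ by exactly~$1$.  Now count: if $k$ is the number of moves on which $\alpha'$ drops by $2$ (these can only be Min moves, and only in rounds where Max has just created such a star), then each of those $k$ rounds is charged four units of $\alpha'$---one for Max's move, two for Min's spoiler, and one for the later star move---against three moves.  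Hence $\alpha'(F)\ge 4k$ while the total number of moves is $\alpha'(F)-k\ge \tfrac34\alpha'(F)$.  The whole argument is a short paragraph; the star created by playing at the third vertex is what replaces your phantom $\varepsilon$, and there is no case split.
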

\begin{proof}
Let $m=\ap(F)$.  Since the matching number of the residual graph is $0$ at
the end of the game, the number of moves played is $m-k$, where $k$ is the
number of moves on which the matching number of the residual forest declines by
$2$.  Such moves can only occur when the residual forest has a nonstar
component.

Suppose that such a component still exists at the beginning of a given round.
Let $x$ be an endpoint of a longest path $P$, with $P$ starting $x,w,v,\ldots$.
Max plays an edge $vu$ in a maximum matching of $F-wv$ (see
Lemma~\ref{vertex_matched}).  This move reduces $\ap$ by 1, as does the later
move in the remaining star at $w$ (whenever played).

The response by Min reduces $\ap$ by at most $2$.  Such moves by Min can
occur only after moves by Max that guarantee two good moves.  In following
the move-by-move sequence of values of $\ap$ on the residual graph, we
associate four units from $\ap(G)$ with each such reduction by $2$: the 
two lost on that move by Min, the one lost on the preceding move by Max, and
the one lost when an edge is later played in the resulting star left at $w$.
Hence $m\ge 4k$, and at least $3m/4$ moves are played.
\end{proof}

We next obtain a sufficient condition for sharpness in 
Theorem~\ref{forestbound}.

\begin{theorem}\label{forestsharp}
Let $n$ be a multiple of $8$.  If an $n$-vertex forest $F$ has $n/2$ leaves,
and the leaves are covered by $n/4$ disjoint copies of $P_4$, then
$\Max(F)=\FR34\ap(F)=\FR38 n$.
\end{theorem}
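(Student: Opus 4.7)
The plan is to pin down $\ap(F)$ exactly, invoke Theorem~\ref{forestbound} for the lower bound on $\Max(F)$, and then supply Min with a strategy that achieves the matching upper bound. First observe that the $n/4$ vertex-disjoint copies of $P_4$ contain $4\cdot(n/4)=n$ vertices in all, so they partition $V(F)$; taking a perfect matching of size $2$ inside each copy yields a perfect matching of $F$ of size $n/2$, so $\ap(F)=n/2$. Theorem~\ref{forestbound} then gives $\Max(F)\ge\FR{3}{4}\ap(F)=\FR{3n}{8}$.

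For the reverse inequality, I would supply Min with the following strategy. Call one of the prescribed $P_4$s \emph{fresh} if none of its four vertices has yet been covered by the matching, and \emph{dangling} if exactly one of its three internal edges has survived in the residual graph (as happens after one end edge of that $P_4$ is played, or after an extra edge of $F$ consumes one of its inner vertices). Min's rule: on each of her turns, play the middle edge of some fresh $P_4$ if one exists, and otherwise play any legal move. The key observation is that each $P_4$ ultimately contributes either one matching edge (if the first internal edge played inside it is its middle, which then isolates both leaves) or two matching edges (if an end edge is played first, forcing the surviving dangling edge to be played later); since Min never voluntarily opens a fresh $P_4$ with an end edge, only Max can initiate the two-edge case.

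To bound the matching size, I would track the potential $\phi=\FR{3}{2}u+d$, where $u$ and $d$ are the current counts of fresh and dangling $P_4$s, and argue that $m+\phi$ (with $m$ the current matching size) stays near its initial value $\FR{3N}{2}=\FR{3n}{8}$, where $N=n/4$. A per-move case analysis shows that Min's middle-of-fresh has $\Delta m+\Delta\phi=-\FR{1}{2}$, Max's end-of-fresh has $+\FR{1}{2}$, and every other move type (Max's middle-of-fresh, either player's play of a dangling edge, or either player's play of an extra edge of $F$ between inner vertices of two distinct $P_4$s) has $\Delta m+\Delta\phi\le 0$. A round that pairs Max's end-of-fresh with Min's middle-of-fresh is thus balanced.

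The main technical obstacle is a single possible \emph{bad} round, in which $u=1$ just before Max's turn and Max plays end-of-fresh, driving $u$ to $0$; Min is then forced to respond without any fresh $P_4$ available, contributing $0$ rather than $-\FR{1}{2}$, and $m+\phi$ rises by $\FR{1}{2}$. I would show that such a round can occur at most once in the whole game, because after it $u=0$ and $u$ never increases again. Combined with the per-move inequalities, this yields $m+\phi\le\FR{3N}{2}+\FR{1}{2}$ throughout; since $\phi=0$ at the final position, $m_{\mathrm{final}}\le\FR{3N}{2}+\FR{1}{2}$. The integrality of $m_{\mathrm{final}}$ together with the integrality of $\FR{3N}{2}$ -- here the hypothesis $8\mid n$, which forces $N$ to be even, is essential -- sharpens the bound to $m_{\mathrm{final}}\le\FR{3N}{2}=\FR{3n}{8}$, matching the lower bound.
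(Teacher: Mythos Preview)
Your argument is correct and reaches the same conclusion as the paper, but by a somewhat different route. Both proofs deduce $\ap(F)=n/2$ from the $P_4$-partition and invoke Theorem~\ref{forestbound} for the lower bound. For the upper bound, both hand Min essentially the same strategy (play the middle edge of a fresh $P_4$ whenever possible), but the accounting differs. The paper counts isolated vertices directly: it argues that in each of the first $n/8$ rounds at least two leaves become isolated---if Max plays inside some $P_4$, Min's middle-edge reply in another copy isolates two leaves, while if Max plays an extra edge joining two $P_4$s, that move alone already isolates the two adjacent leaves---so at least $n/4$ vertices are left uncovered and at most $3n/8$ edges are played. Your potential $\phi=\tfrac32 u+d$ achieves the same bound via a move-by-move invariant, at the price of the extra integrality step needed to absorb the one possible ``bad round''; the potential method is more systematic and might generalize more readily, while the paper's isolated-vertex count is shorter and avoids the half-integer bookkeeping.

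One small caveat: your ``key observation'' that each $P_4$ ultimately contributes one or two matching edges is not literally true, since both inner vertices of a $P_4$ can be captured by extra edges, leaving zero internal edges of that $P_4$ in the final matching. But your actual proof rests on the potential function rather than on that observation, and the per-move case analysis you outline (including the extra-edge cases, which take a fresh $P_4$ to dangling and a dangling $P_4$ to done) does handle this correctly, so the argument is unaffected.
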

\begin{proof}
By the covering condition, no two leaves have a common neighbor, so the pendant
edges form a perfect matching, and then the upper bound follows from
Theorem~\ref{forestbound}.

We provide a strategy for Min.  In each of the first $n/8$ rounds, Min ensures
that two vertices become isolated (at most two untouched copies among the
$n/4$ special copies of $P_4$ will be used).  Isolating $n/4$ vertices ensures
that at most $\FR38 n$ edges will be played.

If Max plays an edge in one of the copies of $P_4$, then Min plays the central
edge in another copy of $P_4$.  If Max plays an edge joining two copies of
$P_4$ (they need not still be intact), then this move already isolates two
vertices.  Min then plays an edge at a leaf in one of these copies, if such an
edge exists, or otherwise plays any edge in an untouched copy of $P_4$.
\end{proof}

Forests satisfying the conditions of Theorem~\ref{forestsharp} are obtained
from $rP_4$ by adding edges joining components.  For example, one may add a
pendant edge at every vertex of $P_{4k}$.

The statement $\gmmin(G)\le \gmmax(G)$ essentially means that there is no
advantage to playing second rather than first.  When this holds in a hereditary
family, there is also no advantage to skipping a turn.  We prove this for
the family of forests.  As in Theorem~\ref{monotone}, the inductive proof is 
facilitated by simultaneously proving another claim, which is that adding
star components does not affect the game on the original forest $F$; neither
player can gain by playing on the added star instead of $F$. 

\begin{theorem}\label{forest_ineq}\label{forest_plus_star}
For every forest $F$ and all $t\in\NN$, the following two statements hold:\\
(1) $\Min(F)\le\Max(F)$.\\
(2) $\Min(F+K_{1,t})=1+\Min(F)$ and $\Max(F+K_{1,t})=1+\Max(F)$.
\end{theorem}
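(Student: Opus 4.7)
The plan is to prove (1) and (2) simultaneously by induction on $|V(F)|$, paralleling the structure used in the proof of Theorem~\ref{monotone}. The base cases with $|V(F)|\le 2$ are immediate; assume both statements hold for all smaller forests.

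First I derive (2) for $F$ from (1) for $F$. Applying Proposition~\ref{optimal} to Max's first move in the game on $F+K_{1,t}$, Max either plays a star edge (outcome $1+\Min(F)$, since the remaining leaves of the star become irrelevant isolated vertices) or plays some edge $e\in E(F)$. In the latter case, the inductive (2) applied to the smaller forest $F-e$ yields $\Min((F-e)+K_{1,t})=1+\Min(F-e)$, so Max's outcome is $2+\Min(F-e)$. Maximizing over Max's first move and using Proposition~\ref{optimal} again to rewrite $\max_{e}\Min(F-e)=\Max(F)-1$, one obtains
\[
\Max(F+K_{1,t})=\max\bigl(1+\Min(F),\;1+\Max(F)\bigr),
\]
which equals $1+\Max(F)$ by (1) for $F$. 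A symmetric computation yields $\Min(F+K_{1,t})=1+\Min(F)$.

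To prove (1) for $F$, I split into three structural cases. If $F$ has an isolated vertex, removing it does not change either game value, so the inductive (1) finishes. If $F$ has a star component $K_{1,s}$ with $s\ge 1$, write $F=F'+K_{1,s}$ and combine the inductive (2) on $F'$ (giving $\Min(F)=1+\Min(F')$ and $\Max(F)=1+\Max(F')$) with the inductive (1) on $F'$. Otherwise every component of $F$ has diameter at least $3$, and my aim is to find an edge $e=uv\in E(F)$ with $\Min(F-\{u,v\})=\Max(F-\{u,v\})$. Any such $e$ certifies (1): by Proposition~\ref{optimal}, Max playing $e$ gives $\Max(F)\ge 1+\Min(F-\{u,v\})$, while Min playing $e$ gives $\Min(F)\le 1+\Max(F-\{u,v\})=1+\Min(F-\{u,v\})\le\Max(F)$.

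To locate such an edge, I would consider a longest path $v_0,v_1,\ldots,v_k$ in some component (with $k\ge 3$), exploit the longest-path constraints that all of $v_1$'s other neighbors are leaves and $v_2$'s other neighbors lie in depth-at-most-$1$ subtrees, and argue by case analysis on $\deg(v_1)$ and $\deg(v_2)$ that an edge near the end of this path is equalizing, with the residual decomposing into isolated vertices, star components (absorbed via the inductive (2)), and a smaller non-star remainder to which the inductive (1) applies. The main obstacle is this last case: producing the equalizing edge requires delicate structural analysis, since the residual may contain a mixture of isolated vertices, stars of various sizes, and further diameter-$\ge 3$ subtrees, and one must carefully track how the inductive hypotheses interact through this decomposition.
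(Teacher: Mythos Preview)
Your inductive scaffolding and your derivation of (2) from (1) are essentially the paper's Step~2, and your Cases~1--2 for (1) are fine.  The genuine gap is Case~3.  Your plan is to exhibit an edge $e=uv$ with $\Min(F-\{u,v\})=\Max(F-\{u,v\})$, but the inductive hypotheses do not supply this: inductive (1) only gives $\Min\le\Max$ on the residual, and after peeling off stars via inductive (2) you are left with a smaller non-star forest $F''$ for which you again know only $\Min(F'')\le\Max(F'')$, not equality.  Nothing in your longest-path analysis forces that remainder to be empty, so the ``equalizing edge'' need not exist, and you yourself flag this as the main obstacle without resolving it.

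The paper avoids this difficulty by arguing by contradiction and importing the earlier general results Theorem~\ref{at_most_one} and Corollary~\ref{del_vertex}.  Assuming $\Max(F)=k-1$ and $\Min(F)=k$, one takes the first three vertices $u,v,w$ of a longest path in a non-star component (with $u$ a leaf) and sets $F'=F-\{v,w\}$.  Proposition~\ref{optimal} and Theorem~\ref{at_most_one} pin down $\Max(F')=k-1$ and $\Min(F')=k-2$, and monotonicity gives $\Max(F-w)=k-1$.  The key structural observation you are missing is that $F-w$ decomposes as $F^*+K_{1,d(v)-1}$, where $F^*$ is $F$ with $v$ and all its neighbors deleted; since the deleted neighbors of $v$ other than $w$ are leaves, $F^*$ differs from $F'$ only in isolated vertices, so $\Max(F^*)=\Max(F')=k-1$.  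Now inductive (2) applied to $F^*$ yields $\Max(F-w)=1+\Max(F^*)=k$, contradicting $\Max(F-w)=k-1$.  This is where the work happens, and your proposal does not reach it.
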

\begin{proof}
We use induction on $|V(F)|$.  Both statements hold by inspection for
$|V(F)|\le 2$, so consider larger $F$.  Suppose that (1) and (2) hold for
forests smaller than $F$.

{\bf Step 1:} {\it (1) holds for $F$.}
Let $k=\Min(F)$.  By Theorem~\ref{at_most_one}, the claim holds unless
$\Max(F)=k-1$.   Since $\Min(F)=\Max(F)$ when every component is a star, we may
choose a non-star component $C$ in $F$.  Since $C$ is not a star, a longest
path in $C$ has at least four vertices; let the first three be $u,v,w$ in order
($u$ is a leaf).

Let $F'=F-\{v,w\}$.  By Proposition~\ref{optimal}, $\Min(F)\le 1+\Max(F')$, so
$\Max(F') \ge k-1$.  Similarly, $\Max(F)\ge 1+\Min(F')$, so $\Min(F')\le k-2$.
Now Theorem~\ref{at_most_one} requires $\Max(F')=k-1$ and $\Min(F')=k-2$.
By Corollary~\ref{del_vertex}, also $\Max(F-w) = k-1$.

Obtain $F^*$ from $F$ by deleting $v$ and all its neighbors; note that
$F-w=F^*+K_{1,t-1}$, where $t = d(v)$.  Moreover, $F^*$ differs from $F'$ only
by deleting isolated vertices, so $\Max(F^*)=\Max(F')$.  Applying (2) to the
smaller forest $F^*$ now yields the contradiction
$$k-1=\Max(F-w)=\Max(F^*+K_{1,t-1})=1+\Max(F^*)=1+\Max(F')=k.$$

{\bf Step 2:} {\it (2) holds for $F$.}
We prove the four needed inequalities by considering an optimal first move $uv$
in the Min-start or Max-start game on $F$ or on $F+K_{1,t}$.  In the displayed
computations when $uv$ is chosen from $F+K_{1,t}$, we use the choice of $uv$,
the validity of (2) for $F'$, and Proposition~\ref{optimal}, in that order.
When $uv$ is an optimal first move on $F$, we use the same three facts in the
reverse order.  In each case, let $F'=F-\{u,v\}$ when $uv\in E(F)$.

{\bf 2a:} 
{\it $uv$ is an optimal first move in the Min-start game on $F+K_{1,t}$.}
If $uv\notin E(F)$, then $\gmmin(F+K_{1,t}) = 1+\gmmax(F) \ge 1+\gmmin(F)$,
since (1) holds for $F$ (by Step 1).  Otherwise,
$$\Min(F+K_{1,t}) = 1+\Max(F'+K_{1,t}) = 1+\Max(F')+1 \ge 1+\Min(F).$$

{\bf 2b:}
{\it $uv$ is an optimal first move in the Min-start game on $F$:}
$$\Min(F+K_{1,t}) \le 1+\Max(F'+K_{1,t}) = 1+\Max(F')+1 = 1+\Min(F).$$

{\bf 2c:}
{\it $uv$ is an optimal first move in the Max-start game on $F$:}
$$\Max(F+K_{1,t}) \ge 1+\Min(F'+K_{1,t}) = 1+\Min(F')+1 = 1+\Max(F).$$

{\bf 2d:} 
{\it $uv$ is an optimal first move in the Max-start game on $F+K_{1,t}$.}
If $uv\notin E(F)$, then $\Max(F+K_{1,t}) = 1+\Min(F) \le 1+\Max(F)$,
since (1) holds for $F$ (by Step 1).  Otherwise,
$$\Max(F+K_{1,t}) = 1+\Min(F'+K_{1,t}) = 1+\Min(F')+1 \le 1+\Max(F).$$

\vspace{-2.4pc}
\end{proof}

\section{Graphs with Small Maximum Degree}\label{smalldeg}

The following corollary of Theorem~\ref{forest_plus_star} is sometimes useful.

\begin{corollary}\label{forest_plus_star_corr}
For a forest $F$ and an integer $t$, an optimal move by the first player in
the game on $F$ is also optimal for that player on $F+K_{1,t}$.
\end{corollary}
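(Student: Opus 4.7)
The plan is to deduce the corollary as a direct algebraic consequence of Proposition~\ref{optimal} and Theorem~\ref{forest_plus_star}(2), with no new combinatorial input needed. For the Max-start case, suppose $uv$ is an optimal first move for Max on $F$; then $uv\in E(F)\subseteq E(F+K_{1,t})$, so $uv$ is legal in the larger game. By Proposition~\ref{optimal}, optimality of $uv$ on $F$ means $\Max(F)=1+\Min(F-\{u,v\})$. After Max plays $uv$ on $F+K_{1,t}$, the residual graph is the forest $(F-\{u,v\})+K_{1,t}$, and the remaining value under optimal play is $1+\Min((F-\{u,v\})+K_{1,t})$.

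Now I would apply Theorem~\ref{forest_plus_star}(2) to the smaller forest $F-\{u,v\}$ to rewrite $\Min((F-\{u,v\})+K_{1,t})=1+\Min(F-\{u,v\})$. Chaining this with the optimality equation for $uv$ on $F$ and then with Theorem~\ref{forest_plus_star}(2) applied to $F$ itself yields
\[
1+\Min((F-\{u,v\})+K_{1,t}) \;=\; 2+\Min(F-\{u,v\}) \;=\; 1+\Max(F) \;=\; \Max(F+K_{1,t}).
\]
Thus playing $uv$ first achieves the game value on $F+K_{1,t}$, and Proposition~\ref{optimal} then certifies $uv$ as an optimal first move for Max on $F+K_{1,t}$.

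The Min-start case is identical in structure with Max and Min swapped: starting from $\Min(F)=1+\Max(F-\{u,v\})$, the same two-fold application of Theorem~\ref{forest_plus_star}(2) (once to $F-\{u,v\}$, once to $F$) shows that playing $uv$ on $F+K_{1,t}$ yields $1+\Max((F-\{u,v\})+K_{1,t})=\Min(F+K_{1,t})$, whence Proposition~\ref{optimal} finishes the case. There is essentially no obstacle here: the main check is just that Theorem~\ref{forest_plus_star}(2) is applicable to $F-\{u,v\}$, which is automatic since $F-\{u,v\}$ is still a forest and the theorem is stated for every forest and every $t\in\NN$. The induction on $|V(F)|$ is already buried inside Theorem~\ref{forest_plus_star}, so the corollary is purely a matter of lining up the two identities.
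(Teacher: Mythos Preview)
Your argument is correct and is essentially the same as the paper's: both proofs invoke Proposition~\ref{optimal} for $F$ and for $F+K_{1,t}$, and Theorem~\ref{forest_plus_star}(2) for $F$ and for $F-\{u,v\}$, then chain the resulting identities to obtain $\Max(F+K_{1,t})=1+\Min\bigl((F-\{u,v\})+K_{1,t}\bigr)$. The paper merely arranges the same four facts into a single circular chain of (in)equalities, whereas you compute the value directly; the content is identical.
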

\begin{proof}
Consider the Max-start game; the proof for the Min-start game is analogous.
Let $uv$ be an optimal first move in the Max-start game on $F$, and let
$F'=F-\{u,v\}$.  Using Proposition~\ref{optimal} and
Theorem~\ref{forest_plus_star} twice each yields
$$1+\Min(F')=\Max(F)=\Max(F+K_{1,t})-1\ge \Min(F'+K_{1,t})=1+\Min(F').$$
Hence $\Max(F+K_{1,t})=1+\Min(F'+K_{1,t})$, making $uv$ an optimal move for
Max on $F+K_{1,t}$.
\end{proof}

As an application of Corollary~\ref{forest_plus_star_corr}, we determine the
asymptotic value of $\gmmax(P_n)$.  While the corollary is not strictly needed
to prove this result, it simplifies the argument.

\begin{theorem}\label{gmmax_path}
For all $n$, we have $3\FL{\FR{n}{7}}\le \Max(P_n) \le 3\ceil{\frac{n}{7}}$.
\end{theorem}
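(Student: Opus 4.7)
The plan is to induct on $n$, exhibiting explicit strategies for both players. The base cases $n\le 7$ are immediate; the critical fact is $\Max(P_7)\ge 3$, which holds because Max opens with $v_3v_4$ and the residual $P_2+P_3$ has matching number $2$, so at least one edge vertex-disjoint from Min's reply remains available for Max.

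For the inductive step of the lower bound with $n\ge 8$, Max again opens with $v_3v_4$, leaving $P_2+P_{n-4}$ with Min to play. By Proposition~\ref{optimal} and Theorem~\ref{forest_plus_star}, $\Max(P_n)\ge 2+\Min(P_{n-4})$. In the Min-start game on $P_{n-4}$, the canonical Min move is $v_2v_3$, which isolates the first vertex and leaves $P_{n-7}$ with Max to play; this gives $\Min(P_{n-4})\le 1+\Max(P_{n-7})$, and I would prove the matching lower bound $\Min(P_{n-4})\ge 1+\Max(P_{n-7})$ by a case analysis on Min's alternatives, showing that for every edge $v_iv_{i+1}$ of $P_{n-4}$ the residual $P_{i-1}+P_{n-5-i}$ has $\Max$-value at least $\Max(P_{n-7})$. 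This uses Theorem~\ref{monotone}(2) for $i$ near an endpoint, Theorem~\ref{forest_plus_star} whenever a $P_2$ or $P_3$ star component is created (it contributes one guaranteed edge), and a direct Max strategy (take an end-edge of the longer surviving subpath) for the remaining interior cases. Chaining the two inequalities gives $\Max(P_n)\ge 3+\Max(P_{n-7})$; iterating with the base values yields $\Max(P_n)\ge 3\FL{n/7}$.

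For the upper bound $\Max(P_n)\le 3\ceil{n/7}$ the argument is symmetric, establishing $\Max(P_n)\le 3+\Max(P_{n-7})$ via a Min strategy that responds to Max's opening $v_iv_{i+1}$ by playing an edge two positions away (for instance, $v_{i-3}v_{i-2}$ when $i\ge 4$, which isolates $v_{i-1}$) so that after three moves the residual graph reduces to a path of length at most $n-7$ with Min to move. A parallel case analysis (dualizing the lower-bound argument and invoking Theorem~\ref{at_most_one}(1) to swap between $\Max$ and $\Min$ up to one move) then yields the desired ceiling bound upon iteration.

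The main obstacle in both directions is the interior case analysis, handling the various two-component residuals $P_a+P_b$ that arise whenever the opponent deviates from the canonical opening. Theorem~\ref{forest_plus_star} and Corollary~\ref{forest_plus_star_corr} absorb $P_2$ and $P_3$ star components uniformly (each contributing exactly one edge to the count), and Theorem~\ref{monotone}(2) supplies monotonicity under vertex deletion, which together reduce each residual case to a comparison of $\Max$-values on single paths differing by only a few vertices from $P_{n-7}$.
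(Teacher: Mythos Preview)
Your inductive plan is genuinely different from the paper's proof: the paper fixes a single global strategy for each player (Min always plays the second edge of a longest remaining subpath, Max always plays the third), reduces to $7\mid n$ via monotonicity, and then bounds the number of isolated vertices at the end of the first phase by a component-counting argument. No case split on the opponent's move is ever needed.

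Your proposal, by contrast, hinges on the recursion $\Max(P_n)=3+\Max(P_{n-7})$, and the interior cases you flag as ``the main obstacle'' are a real gap. For the lower bound you need $\Max(P_{i-1}+P_{n-5-i})\ge\Max(P_{n-7})$ for every $i$; when $i-1\ge4$ and $n-5-i\ge4$, neither component is a star, so Theorem~\ref{forest_plus_star} does not apply, and the residual is not an induced subgraph of any single path, so Theorem~\ref{monotone}(2) does not apply either. Your suggestion that Max ``take an end-edge of the longer surviving subpath'' does not reduce the position to a single path: after that move Min plays on a two-path forest and you face the same difficulty one level deeper. So the sentence ``reduce each residual case to a comparison of $\Max$-values on single paths differing by only a few vertices from $P_{n-7}$'' is simply false for these interior moves. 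Even the near-endpoint case $i=4$ needs $1+\Max(P_{n-9})\ge\Max(P_{n-7})$, which does not follow from Corollary~\ref{del_vertex} alone (that only gives a drop of at most $2$ per vertex); you would have to strengthen the induction hypothesis to the \emph{exact} recursion and carry the base values along. The upper-bound direction has the symmetric problem: if Max opens deep in the interior, your Min response still leaves a union of two long paths, and nothing you cite bounds $\Max(P_a+P_b)$ from above. The paper's counting argument sidesteps all of this precisely because it never needs to compare game values on disconnected unions of long paths.
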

\begin{proof}
For $7k\le n\le 7k+6$, the claimed statement is $3k\le \Max(P_n)\le 3(k+1)$.
Hence it suffices to prove the claim when $7\mid n$ and apply monotonicity from
Theorem~\ref{monotone}(2).

At each point during a game on $P_{7k}$, the residual graph is a disjoint union
of paths.  Each move increases the number of components in the residual graph
by at most $1$. 

{\it Upper bound:}  We give a strategy for Min.  Min always plays the second
edge of a longest remaining path, unless only isolated vertices and edges
remain.  Let $t$ be the number of turns played when the second phase begins,
with $s_1$ isolated vertices and $s_2$ isolated edges.
There are at most $t+1$ components at that time, so $s_1 + s_2 \le t+1$.  Since
each move deletes two vertices, $s_1 + 2s_2 + 2t = 7k$.  Now
$$7k + s_1 = 2t + 2s_2 + 2s_1 \le 2t + 2(t+1) = 4t+2.$$

Since each move by Min in the first phase isolates at least one vertex, 
$$(t-1)/2 \le s_1 \le 4t+2 - 7k.$$
Simplifying yields $t\ge\ceil{(14k-5)/7}=2k$.  Moreover,
$s_1\ge\ceil{(t-1)/2}\ge k$.  Since exactly $s_1$ vertices remain unmatched at
the end of the game, $\gmmax(P_{7k}) \le (n - s_1)/2 \le 3k$.

{\it Lower bound}:  We give a strategy for Max.  Max always plays the third
edge of a longest remaining path, unless no remaining path has four vertices,
in which case Max plays any edge of a longest remaining path.  By
Corollary~\ref{forest_plus_star_corr}, we may assume that Min plays an
isolated edge only when no other moves remain.

Let $t$ be the number of turns played when the second phase begins, with $s_i$
remaining components having $i$ vertices, for $i\le 3$.  Since each previous
move by Max created an isolated edge, and no isolated edges have yet been 
played, $s_2 \ge t/2$.  Each move has increased the number of components by at
most 1, so $s_1+s_2+s_3\le t+1$.  Thus $s_1+s_3 \le t/2 + 1$.

Counting the vertices played and the vertices remaining yields
$$7k = 2t+s_1+2s_2+3s_3 \ge 2t+(s_1+s_3)+2s_2 \ge 3t+(s_1+s_3),$$
so $s_1+s_3\le 7k-3t$.  The sum of $1/7$ times this inequality and $6/7$ times
$s_1+s_3\le t/2+1$ is $s_1+s_3\le k+6/7$.  By integrality, $s_1+s_3\le k$.
At the end of the game, exactly $s_1+s_3$ vertices remain unmatched, so
$\Max(P_{7k})\ge3k$.
\end{proof}

Our final results concern regular graphs.  When $G$ is $3$-regular,
$\match(G)\ge\lceil{4(\size{V(G)}-1)/9}\rceil$, and this is sharp~\cite{BDDFK}.
We seek an analogous sharp lower bound for $\Max(G)$ in terms of $\size{V(G)}$.
Using the bound for $\match(G)$, Theorem~\ref{general_lower} yields
approximately $\match(G) \ge \FR8{27}\size{V(G)}$.  An easy argument proves a
stronger bound, which we phrase for general regular graphs.

\begin{proposition}\label{cubic_lower_easy}
If $G$ is a connected $n$-vertex $r$-regular graph, then
$\gmmax(G)\ge\FR{rn-2}{4r-3}$.
\end{proposition}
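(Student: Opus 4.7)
The plan is to give an explicit strategy for Max and then bound the game length by counting residual-graph edges. Write $d_i(v)$ for the degree of $v$ in the residual graph just before the $i$th move, and let $u_iv_i$ denote the edge played on that move. Move $i$ destroys exactly $d_i(u_i)+d_i(v_i)-1$ residual edges (the edge itself together with the other edges at $u_i$ and $v_i$), and since the residual graph is edgeless at the end of the game, summing over all $m$ moves yields the identity
$\sum_{i=1}^{m}\bigl(d_i(u_i)+d_i(v_i)\bigr)=rn/2+m.$

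Max's strategy will be as follows: on his first move play any edge (necessarily with $d_1(u_1)+d_1(v_1)=2r$); on each later move, play an edge incident to a \emph{touched} vertex, meaning a vertex in the residual graph that has already had at least one of its original neighbors matched. A touched vertex has residual degree at most $r-1$, so each such Max move satisfies $d_i(u_i)+d_i(v_i)\le 2r-1$, while Min's moves and Max's opening move obey only the trivial bound $2r$.

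The main step, and the only place where connectedness of $G$ enters, is to show that Max can always execute this strategy: whenever the residual graph has an edge and at least one prior move has been played, it contains a touched vertex of positive degree. I will argue by contradiction. Let $W$ be the set of matched vertices, let $T$ be the set of touched residual vertices, and let $U_0$ be the remaining (untouched) residual vertices. If every vertex of $T$ were isolated in the residual graph, then each $T$-vertex would have all its $G$-neighbors in $W$; meanwhile, by definition each $U_0$-vertex has no neighbors in $W$, and (since $T$-vertices have no residual edges) no neighbors in $T$ either. Thus $U_0$ would have no edges in $G$ to $W\cup T$; as $G$ is connected and $W$ is nonempty, this forces $U_0=\varnothing$, so the residual graph is edgeless, contradicting the hypothesis.

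Given feasibility, the bound follows from a short computation. Combining the strategy's per-move bounds---at most $2r$ on Max's first move and each of Min's $\lfloor m/2\rfloor$ moves, and at most $2r-1$ on each of Max's remaining $\lceil m/2\rceil-1$ moves---with the identity $\sum\bigl(d_i(u_i)+d_i(v_i)\bigr)=rn/2+m$ and rearranging (splitting into the two parities of $m$) yields $m\ge(rn-2)/(4r-3)$. The ``$-2$'' in the numerator traces directly to the unavoidable weakness of Max's first move; beyond bookkeeping the parity of $m$, I anticipate no further subtlety in the arithmetic, and the real content of the proof lies in the feasibility lemma above.
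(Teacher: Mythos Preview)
Your proof is correct and takes essentially the same approach as the paper's: both have Max play, after his first move, an edge at a residual vertex of degree at most $r-1$, and then bound the number of edges removed per move. The paper asserts the feasibility of this strategy in one clause (``Since $G$ is connected, the residual graph at any time after the first move has a vertex of nonzero degree less than $r$''), whereas you spell out the contradiction argument; your degree-sum identity $\sum_i\bigl(d_i(u_i)+d_i(v_i)\bigr)=rn/2+m$ is just a repackaging of the paper's per-move edge-deletion count, leading to the same arithmetic.
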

\begin{proof}
We give a strategy for Max to ensure that edges are removed from the residual
graph ``slowly''.  Max first plays any edge.  On each turn thereafter, Max
plays any edge incident to a vertex of smallest nonzero degree.

Since $G$ is connected, the residual graph at any time after the first move
has a vertex of nonzero degree less than $r$.  Thus each move by Max after the
first deletes at most $2r-2$ edges.  Each move by Min deletes at most $2r-1$
edges.  Since Max moves before Min, after $k$ turns the residual graph contains
at least $\FR r2n - \floor{\FR{4r-3}2 k} - 1$ edges.  To end the game, all edges
must be deleted.  At that time $k\ge \FR{rn-2}{4r-3}$.
\end{proof}

For $r\ge 5$, the bound from Proposition~\ref{cubic_lower_easy} is weaker than
the bound obtained from Theorem~\ref{general_lower} by using the best-known
lower bounds on $\match(G)$.  Henning and Yeo~\cite{HY} proved
$\ap(G)\ge \frac{(r^3-r^2-2)n-2r+2}{2r^3-6r}$ for odd $r$, and this is sharp.
Multiplied by $2/3$ from Theorem~\ref{general_lower}, the lower bound on
$\Max(G)$ would be asymptotic to $n/3$ for large $r$ and $n$, while the lower
bound from Proposition~\ref{cubic_lower_easy} is only about $n/4$.

However, for $r=3$ Proposition~\ref{cubic_lower_easy} yields
$\Max(G)\ge \FR n3-\FR29$.  It is easy but tedious to improve the lower bound
to $n/3$ by considering the end of the game.  If Min moves last and still
eliminates five remaining edges, then the play by Min is the center of a double
star, and the previous move by Max could not have reduced all four leaves to
degree $1$.  Hence a vertex of degree $1$ was available to Max, and Max would
have deleted only three edges instead of four.  This implies that the last two
moves delete at most eight edges.  Case analysis along these lines improves the
additive constant to $0$.  We omit the details, partly since we do not believe
that $1/3$ is the best coefficient on the linear term.

\begin{example}\label{disconn}
For $3$-regular graphs not required to be connected, there is an $n$-vertex
graph with $\Max(G)=\Min(G)=3n/8$ when $16\mid n$.  Each component $H$ has a
central vertex $x$ whose deletion leaves $3K$, where $K$ is the $5$-vertex graph
obtained by subdividing one edge of $K_4$.

A short case analysis shows that $\Max(H)=\Min(H)=6$, with four vertices 
isolated at the end of the game.  Since $\Max(H)=\Min(H)$ and the value is
even, each player can respond to the other in the same component and guarantee
never doing worse than $3n/8$, component by component.
\qed
\end{example}

Within the smaller family of connected cubic graphs, our construction is weaker.

\begin{theorem}\label{cubic_upper}
There is a sequence $G_1, G_2,\ldots$ of connected cubic graphs such that
$\gmmax(G_k) < (7/18)\size{V(G_k)}$.
\end{theorem}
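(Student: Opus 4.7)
The plan is to exhibit an explicit infinite family of connected cubic graphs and a Min strategy that beats the ratio $7/18$. A natural starting point is the disconnected construction of Example~\ref{disconn}, which achieves the better ratio $3/8$ on a disconnected $3$-regular graph built from copies of a $16$-vertex gadget $H$ with $\Max(H)=\Min(H)=6$. I would modify that construction by sacrificing a small number of internal edges in order to chain copies of $H$ into a single connected cubic graph, and then argue that the loss incurred by the modification is small enough that the ratio remains strictly below $7/18$.

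Concretely, for each $k$ I would build $G_k$ by taking $k$ disjoint copies $H_1,\ldots,H_k$ of $H$, removing one suitably chosen edge $u_iv_i$ from each copy (producing two degree-$2$ attachment vertices per copy), and then adding $k$ new bridge edges $v_iu_{i+1}$ (indices modulo $k$) so that the attachment vertices recover degree $3$ and all copies are joined. This gives a connected cubic graph on $16k$ vertices. Since $6k < (7/18)\cdot 16k = 56k/9$ for every $k\ge 1$, it suffices to show $\Max(G_k)\le 6k$, or more realistically $\Max(G_k)\le 6k + O(1)$, after which reindexing the sequence to skip small $k$ yields the claim.

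Min's strategy will be to mimic the optimal Min strategy from Example~\ref{disconn} inside whichever copy contains Max's most recent move. The deleted edges $u_iv_i$ must be chosen judiciously so that (a)~the Min strategy for $H$ remains valid after $u_iv_i$ is removed (i.e., the deleted edge is not essential to Min's guarantee of four isolated vertices), and (b)~Min can still force the four-vertex isolation inside $H_i$ in a way that controls the status of $u_i$ and $v_i$, preventing Max from feeding many bridge edges into the final matching. If Max plays a bridge edge $v_iu_{i+1}$, Min would respond inside $H_i$ or $H_{i+1}$ with a move that still forces the target isolation in the affected copy, at the modest cost of a single bridge edge added to the matching.

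The main obstacle is showing that Max cannot exploit the bridge edges to add extra matching edges uncontrollably. This amounts to a careful extension of the case analysis behind Example~\ref{disconn}, tracking the state of each attachment vertex throughout the game and bounding the number of bridge edges that can enter the final matching by a function sublinear in $k$ (even an $O(1)$ bound would suffice). The verification that the modification of $H$ into $H_i$ by removing one edge does not fundamentally alter the play—so that Min's original strategy still caps the internal matching of each $H_i$ at $6$—is where the bulk of the work lies.
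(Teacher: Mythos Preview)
Your plan defers precisely the step that carries all the weight. The pairing argument behind $\Max(H)=\Min(H)=6$ in Example~\ref{disconn} relies on Min always being the second player to move in each component; once the copies are chained, Max can play a bridge $v_iu_{i+1}$, and Min's single reply lands in only one of $H_i,H_{i+1}$, breaking the parity in the other. Worse, the $k$ bridges are pairwise nonadjacent, so Max could in principle play every one of them; your hope that only sublinearly many bridges enter the matching is unfounded. You could retreat to the weaker target $\Max(G_k)\le(6+c)k$ with $c<2/9$, charging each played bridge against the two attachment vertices it removes from the neighboring gadgets, but that still requires analyzing the modified gadgets $H_i-u_i$, $H_i-v_i$, and $H_i-\{u_i,v_i\}$ and showing that Min can isolate four vertices in each regardless of which endpoints have already been consumed and who is next to move there---none of which you have attempted.

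The paper sidesteps these parity and bridge-accounting issues with a different construction: it attaches copies of a $5$-vertex gadget $K$ (a $K_4$ with one edge subdivided) to the leaves of a complete cubic tree $T_{k+1}$. Writing $B$ for $K$ together with its pendant tree-edge, Min always plays one of the three edges of $B$ that lie in no perfect matching of $B$, either in the copy Max just entered or in a fresh copy. This caps each copy of $B$ at two matching edges with no pairing or parity bookkeeping. All remaining matching edges lie in $T_k$, whose matching number is at most $2^k-1$ by a direct vertex-cover count. The tree skeleton thus cleanly separates the gadget analysis from the connecting edges, exactly the decoupling your cycle-chaining lacks.
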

\begin{proof}
Again let $K$ be the $5$-vertex graph formed by subdividing one edge of $K_4$.
Let $T_k$ denote the {\em complete cubic tree of height $k$}, the rooted tree
in which each non-leaf vertex has degree 3 and each leaf has distance $k$ from
the root.  Let $G_k$ be the $3$-regular graph formed by identifying each leaf
in $T_{k+1}$ with the vertex of degree $2$ in a copy of $K$.  Note that $G_0$
is the graph in Example~\ref{disconn}.

When every non-leaf vertex in a tree has degree $3$, the number of leaves is
the number of non-leaves plus $2$.  Since $T_{k+1}$ has $3\cdot 2^k$ leaves, it
thus has $3\cdot 2^k-2$ non-leaves, and $G_k$ has $18\cdot 2^k-2$ vertices.

We give a strategy for Min in the game on $G_k$.  Let $B$ be the $6$-vertex
graph consisting of $K$ plus a pendant edge at the vertex having degree $2$ in
$K$.  In $G_k$ there are $3 \cdot2^k$ copies of $B$, each containing one leaf
of $T_{k+1}$ as its cut-vertex.  Let $F$ be the set of three edges in $B$ that
lie in no perfect matching in $B$.

If a move by Max is the first edge played in some copy of $B$, then Min plays
an edge of $F$ in that copy of $B$ (at least one such edge is available).
Otherwise, Min plays an edge of $F$ in some other copy of $B$, if possible.
If already some edge has been played in every copy of $B$, then Min plays any
legal move.

Min thus ensures that at most two edges are played in each copy of $B$.
To bound the number of edges played in $T_{k+1}$ that are not in the copies of
$B$, we bound $\match(T_k)$.  Let $S$ be the set of vertices in $T_k$ whose
distance from a nearest leaf is odd.  Since $T_k-S$ has no edges, 
$\ap(T_k)\le |S|$.  Since the number of vertices at distance $i$ from the root
is at most half the number of vertices at distance $i+1$, we have
$\size{S} \le (1/3) \size{V(T_k)}$.  Thus
$\match(T_k)\le \FL{(1/3)(6\cdot 2^{k-1}-2)}= 2^k-1$.
It now follows that
$$
\Max(G_k)\le 2^k-1 + 6\cdot2^k = 7\cdot2^k-1 < \frac{7}{18}(18\cdot2^k-2).
$$

\vspace{-2.4pc}
\end{proof}

Note that in Theorem~\ref{cubic_upper} we did not determine $\Max(G_k)$.
In fact, Max can force two edges in each copy of $B$ and can force a 
matching that covers $S$ in $T_k$, so the bound is essentially sharp. 
We omit these arguments, because the relevant question here is the upper bound.


\begin{thebibliography}{9}
\frenchspacing
{\small

\bibitem{BDDFK}
T. Biedl, E.D. Demaine, C.A. Duncan, R. Fleischer, and S.G. Kobourov, Tight
bounds on maximal and maximum matchings,
{\it Discr. Math.} 285 (2004), 7--15.

\bibitem{BHW} 
C. Bir\'o, P. Horn, and J. Wildstrom,
personal communication.

\bibitem{Bra}
S. Brandt,
Subtrees and subforests of graphs,
{\it J. Combin. Theory B} 61 (1994), no. 1, 63--70.

\bibitem{CHR}
S.C. Cater, F. Harary, and R.W. Robinson,
One-color triangle avoidance games,
in {\it Proc. 32nd SE Intl. Conf. Combin., Graph Theory and Comput.
(Baton Rouge, 2001)},
{\it Congr. Numer.} 153 (2001), 211--221.

\bibitem{EHM}
P. Erd\H{o}s, A. Hajnal, and J.W. Moon,
A problem in graph theory,
{\it Amer. Math. Monthly} 71 (1964), 1107--1110.

\bibitem{FK}
O.N. Feldheim and M. Krivelevich,
Winning fast in sparse graph construction games,
{\it Combin. Probab. Comput.} 17 (2008), no. 6, 781--791.

\bibitem{FRS}
Z. F\"uredi, D. Reimer, and A. Seress,
Hajnal's triangle-free game and extremal graph problems,
in {\it Proc. 22nd SE Intl. Conf. Combin., Graph Theory and Comput.
(Baton Rouge, 1991)},
{\it Congr. Numer.} 82 (1991), 123--128.

\bibitem{GP}
P. Gordinowicz and P. Pralat, The first player wins the one-colour triangle
avoidance game on 16 vertices,
{\it Discuss. Math. Graph Theory} 32 (2012), 181--185.

\bibitem{HKSS1}
D. Hefetz, M. Krivelevich, M. Stojakovi\'c, and T. Szab\'o,
Planarity, colorability, and minor games, 
{\it SIAM J. Discr. Math.} 22 (2008), no. 1, 194--212.

\bibitem{HKSS2}
D. Hefetz, M. Krivelevich, M. Stojakovi\'c, and T. Szab\'o,
Fast winning strategies in avoider-enforcer games,
{\it Graphs Combin.} 25 (2009), no. 4, 533--544.

\bibitem{HY}
M.A. Henning and A. Yeo, Tight lower bounds on the size of a maximum matching
in a regular graph,
{\it Graphs Combin.} 23 (2007), 647--657.

\bibitem{MS1}
N. Mehta and A. Seress,
Connected, bounded degree, triangle avoidance games,
{\it Electron. J. Combin.} 18 (2011), no. 1, Paper 193.

\bibitem{MS2}
N. Mehta and A. Seress,
Bounded degree, triangle avoidance graph games,
{\it Graphs Combin.}, to appear.

\bibitem{P}
M.D. Plummer,
On $n$-extendable graphs.
{\it Discrete Math.} 31 (1980), no. 2, 201--210.

\bibitem{Pra}
P. Pra\l at,
A note on the one-colour avoidance game on graphs,
{\it J. Combin. Math. Combin. Comput.} 75 (2010), 75--85.

\bibitem{Ser}
A. Seress,
On Hajnal's triangle-free game,
{\it Graphs Combin.} 8 (1992), no. 1, 75--79.

\bibitem{Tur}
P. Tur\'an,
Eine Extremalaufgabe aus der Graphentheorie,
{\it Mat. Fiz. Lapok} 48 (1941), 436--452.

}

\end{thebibliography}
\end{document}